\newtheorem{theorem}{Theorem}[section]
\newtheorem{proposition}[theorem]{Proposition}
\newtheorem{lemma}[theorem]{Lemma}
\newtheorem{definition}[theorem]{Definition}
\newtheorem{conjecture}[theorem]{Conjecture}
\numberwithin{equation}{section}
\newcommand{\rr}{{\mathbb R}}
\newcommand{\zz}{{\mathbb Z}}
\newcommand{\nn}{{\mathbb N}}
\newcommand{\cc}{{\mathbb C}}
\newcommand{\qq}{{\mathbb Q}}
\newcommand{\one}{{\bf 1}}
\newcommand{\proj}{\text{proj}}
\newcommand{\supp}{\hbox{supp\,}}
\newcommand{\C}{\mathbb{C}}
\begin{document}

\title{Recent progress on Favard length estimates for planar Cantor sets}
\author{Izabella {\L}aba}
\date{December 1, 2012}
\maketitle

\allowdisplaybreaks{


\section{Introduction}

Let $E_\infty =\bigcup_{n-1}^\infty E_n$ be a self-similar Cantor set in the plane, constructed as a limit of Cantor iterations $E_n$. We will assume that $E_\infty$ has Hausdorff dimension 1. The {\em Favard length problem}, also known as {\em Buffon's needle problem} (after Comte de Buffon), concerns the average (with respect to the angle) length of linear projections of $E_n$. 
This turns out to be a fascinating and very difficult problem. Substantial progress on it was only achieved in the last few years, revealing connections to harmonic analysis, combinatorics and number theory. The purpose of this paper is to survey these developments with emphasis on presenting the main ideas in the simplest possible settings, often at the expense of generality.

The interest in such estimates is motivated by, on the one hand, considerations in ergodic theory, and on the other hand, questions in the theory of analytic functions. On the analytic side, Favard length estimates have been linked to the Painlev\'e problem in complex analysis (on a geometric characterization of removable sets for bounded analytic functions), through the work of Vitushkin, Mattila, Jones, David, Tolsa and others. (See \cite{To} for more details.) In ergodic theory, there is a large family of related conjectures and open problems concerning projections of fractal sets, Bernoulli convolutions, intersections and sumsets of fractal Cantor sets. For instance, a well known conjecture of Furstenberg states that all linear projections of the 4-corner set (defined below) corresponding to irrational slopes have Hausdorff dimension 1. This is still unsolved, but it is possible that progress on it could have impact on the Favard length problem, or vice versa.

We begin by giving a few examples of the Cantor sets under consideration.

\medskip\noindent
{\bf Example 1: The 4-corner set.} Divide the unit square $[0,1]^2$ into
16 congruent squares and keep the 4 squares on the corners, discarding the rest. This is
$K_1$. Repeat the procedure within each of the 4 selected squares 
to produce $K_2$, consisting of 16 squares. Continue the iteration
indefinitely. The resulting set $K=\bigcap_{n=1}^\infty K_n$ is a fractal self-similar set of dimension 1, also called the Garnett set, after John Garnett used it in the theory of analytic functions as an example of a set with positive 1-dimensional length and zero analytic capacity.

\begin{figure}[htbp]\centering\includegraphics[width=4.in]{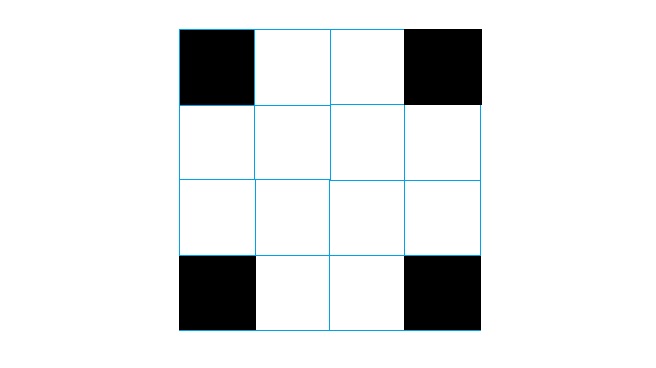}
\caption{The 4-corner set, !st iteration.}
\end{figure}

\begin{figure}[htbp]\centering\includegraphics[width=4.in]{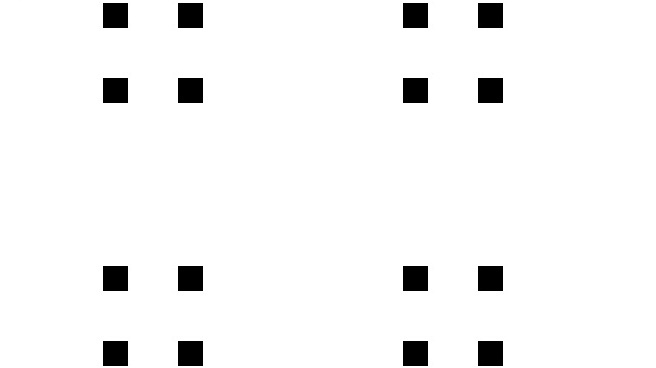}
\caption{The 4-corner set, 2nd iteration.}
\end{figure}

\medskip\noindent
{\bf Example 2: The Sierpi\'nski gasket.} The construction is similar, but we start with a triangle, divide each side in three equal parts, and keep the three triangles in the corners. (In the literature, the Sierpi\'nski gasket is often defined so that the three triangles have sidelengths equal to half, rather than a third, of the sidelength of the large triangle. The dimension of that set is $(\log 4)/(\log 3)$. We have modified the scaling so as to produce a gasket of dimension 1.)

\begin{figure}[htbp]\centering\includegraphics[width=4.in]{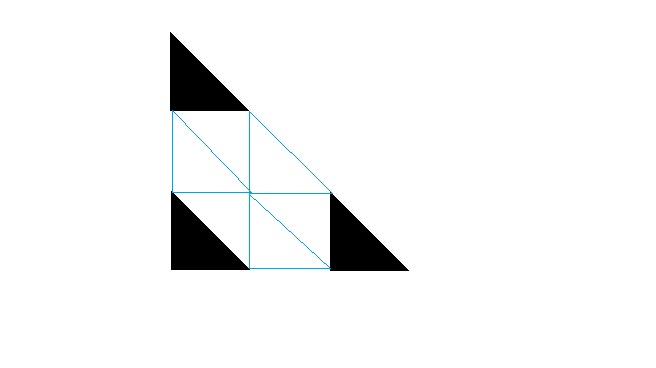}
\caption{The Sierpi\'nski gasket, 1st iteration.}
\end{figure}

\medskip\noindent
{\bf Example 3: Rational product Cantor sets.} Fix an integer $L\geq 4$. Divide the initial square into $L^2$ identical squares, fix $A,B\subset \{0,1,\dots,L-1\}$, let $E_1$ consist of those squares whose lower left vertices are $(a/L,b/L)$ for $a\in A$ and $b\in B$, then continue the iteration indefinitely. If $|A||B|=L$ (which we will assume from now on), the Cantor set 
$E=\bigcap_{n=1}^\infty E_n$ has dimension $1$. We will also assume that $|A|,|B|\geq 2$, to avoid degenerate cases. The 4-corner set is a special case of this, with $L=4$ and $A=B=\{0,3\}$. Another example is shown in Figure \ref{fig-product1}.

\begin{figure}[htbp]\centering\includegraphics[width=4.in]{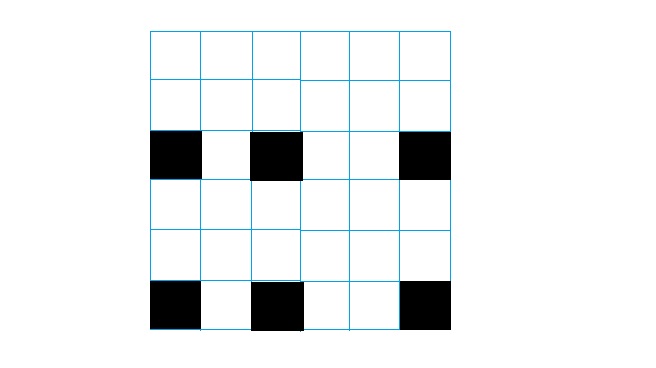}
\caption{The first iteration of a product Cantor set with $L=6,\ A=\{0,2,5\},\ B=\{0,3\}$.}
\label{fig-product1}
\end{figure}

\medskip\noindent
{\bf Example 4: General self-similar sets.}
Identify the plane with $\cc$, and let $T_1,\dots,T_L:\C\to\C$ be  similarity maps of the form $T_j(z)=\frac1{L}z+z_j$, where $z_1,\dots,z_L$ are distinct and not colinear. Then there is a unique compact set $E_\infty\subset\cc$ such that $E_\infty=\bigcup_{j=1}^L T_j(E_\infty)$. Under mild conditions (the {\em open set condition} of \cite{mattila} suffices), $E_\infty$ has Hausdorff dimension 1. Instead of Cantor iterations, one then considers the $L^{-n}$-neighbourhoods $E_n$ of $E_\infty$. 

\bigskip

We now proceed to the rigorous statement of the problem. 
Let $\proj_\theta(x,y)=x\cos\theta
+y\sin\theta$ be the projection of $(x,y)$ on a line making an angle $\theta$ with the positive 
$x$-axis (all projections are treated as subsets of $\rr$). 

\begin{definition}\label{def-favard}
The {\em Favard length} of a compact set $E\subset\rr^2$  is defined as
\begin{equation}\label{favard}
{\rm Fav}(E):=\frac1{\pi}\int_0^\pi |\proj_\theta(E)|d\theta,
\end{equation}
the average (with respect to angle) length of its linear projections.
\end{definition}

In each of the above examples, $E_\infty$ is unrectifiable, hence by a
theorem of Besicovitch we have $|\proj_\theta(E_\infty)|=0$ for almost every $\theta\in[0,\pi]$. 
It follows that ${\rm Fav}(E_n)\to 0$ as $n\to\infty$. The question is, how fast?

Examples 1 and 2 show that the pointwise rate of decay of $|\proj_\theta(E_n)|$ as $n\to\infty$, with $\theta$ fixed, depends very strongly on the choice of $\theta$. For instance, let $E_n$ is the $n$-th iteration of the 4-corner set. If $\theta=0$, we have $|\proj_0(E_n)|=2^{-n}$. However, if we let $\theta=\tan^{-1}(2)$, then $\proj_\theta(E_n)$ is the same line segment for all $n$, so that there is no decay at all (cf. Figure \ref{fig-4}). In fact, the 4-corner set has a dense set of directions where the decay is exponential in $n$, and also a dense (but measure zero, as required by Besicovitch's theorem) set of directions where $\proj_\theta(E_\infty)$ has positive measure. The same phenomenon occurs for the gasket set in Example 2. (See Kenyon \cite{Ke} and Lagarias-Wang \cite{LW1}, \cite{LW2} for more details.)

\begin{figure}[htbp]\centering\includegraphics[width=4.in]{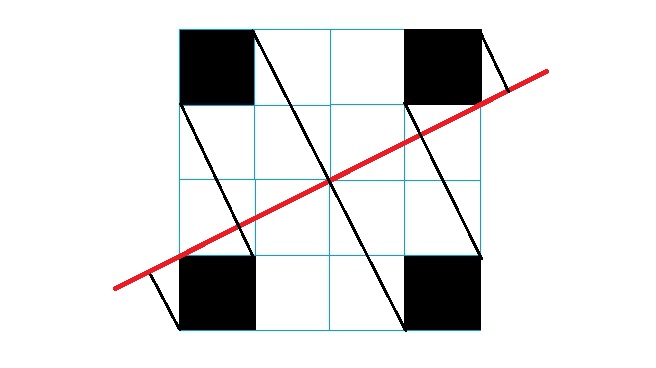}
\caption{The 4-corner set has projections of positive measure.}
\label{fig-4}
\end{figure}

The expected asymptotic behaviour of the averages Fav$(E_n)$ is neither constant nor exponentially decaying. 
A lower bound for a wide class of sets including all of the above examples is due to Mattila \cite{Mattila}: 
$${\rm Fav}(E_n)\geq \frac{C}{n}$$
Bateman and Volberg \cite{BV} improved this to $(C\log n)/n$ for the 4-corner set. Their result and proof extend to the gasket set, but seem hard to generalize beyond that. 

The known upper bounds for the sets in Examples 1-4 are summarized in the following theorem.

\begin{theorem}\label{thm-upper}
(i) We have
\begin{equation}\label{e-npv}
{\rm Fav}(E_n)\leq C n^{-p} \hbox{ for some }p>0
\end{equation}
in the following cases (with the exponent $p$ depending on the set):

\begin{itemize}
\item  the 4-corner set (Nazarov-Peres-Volberg \cite{NPV})
\item the Sierpi\'nski gasket (Bond-Volberg \cite{BV1})
\item  the general self-similar sets in Example 4 with $L=4$ (Bond-{\L}aba-Volberg \cite{BLV})
\item  the rational product sets in Example 3 under the additional ``tiling" condition that $|proj_{\theta_0}(E_\infty)|>0$ for some direction $\theta_0$ ({\L}aba-Zhai \cite{LZ}). 
\end{itemize}

\smallskip\noindent
(ii) (Bond-{\L}aba-Volberg \cite{BLV}) For rational product sets as in Example 3 with $|A|,|B|\leq 6$ (but with no additional tiling conditions), we have 
\begin{equation}\label{e-blv}
{\rm Fav}(E_n)\lesssim n^{-p/\log\log n}
\end{equation}
The cardinality assumption can be dropped under certain number-theoretic conditions on $A,B$, and in some circumstances we can improve (\ref{e-blv}) to a power bound;  see Theorem \ref{blv-thm}  below for more details.

\smallskip\noindent
(iii) (Bond-Volberg \cite{BV3}) For general self similar sets, we have the weaker bound 
\begin{equation}\label{e-bv}
{\rm Fav}(E_n)\lesssim e^{-c\sqrt{\log n}}
\end{equation}

\end{theorem}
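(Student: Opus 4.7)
The plan is to follow the Fourier-analytic framework introduced by Nazarov-Peres-Volberg \cite{NPV} and refine it according to the structure of each Cantor set. Let $\mu_n$ be the natural self-similar probability measure on $E_n$, and let $f_\theta$ denote the density of $\proj_\theta\mu_n$, which is supported on $\proj_\theta(E_n)$. Cauchy-Schwarz on the support and Plancherel yield
\[|\proj_\theta(E_n)|\geq \|f_\theta\|_2^{-2},\qquad \|f_\theta\|_2^2=\int|\whm_n(\xi\cos\theta,\xi\sin\theta)|^2\,d\xi.\]
To convert this lower bound on individual projections into an upper bound on the average, one runs a second-moment/orthogonality argument: if $\|f_\theta\|_2^2$ is of order $1$ on a set $S\subset[0,\pi]$, then $\text{Fav}(E_n)\lesssim |S|^{-1}$ up to controlled losses. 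The task therefore reduces to producing a set $S$ of directions of measure at least $n^{-p}$ (respectively $n^{-p/\log\log n}$ or $e^{-c\sqrt{\log n}}$) on which $\whm_n$ exhibits good Fourier decay along the ray of angle $\theta$ over the critical annulus $L^{n-1}\leq|\xi|\leq L^n$.

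For the product Cantor sets of Example 3, $\whm_n$ factorizes as
\[\whm_n(\xi_1,\xi_2)=\prod_{k=0}^{n-1}M_A(L^{-k}\xi_1)\,M_B(L^{-k}\xi_2),\qquad M_A(\xi)=\frac1{|A|}\sum_{a\in A}e^{2\pi ia\xi/L},\]
and similarly for $M_B$. Along the line of angle $\theta$, this is a product of $2n$ factors of modulus $\leq 1$, so exponential decay in $n$ follows once a positive fraction of the rescaled arguments $L^{-k}\xi\cos\theta$, $L^{-k}\xi\sin\theta$ land in $\{|M_A|\leq 1-c\}\cup\{|M_B|\leq 1-c\}$. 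For the 4-corner set \cite{NPV} this is handled by a pigeonhole on scales, since $M_A=M_B$ is a cosine with an arithmetic progression of zeros. The gasket \cite{BV1} and the $L=4$ similarity sets \cite{BLV} require more delicate zero-counting for the relevant trigonometric polynomials. For the tiling product sets \cite{LZ}, the existence of a direction $\theta_0$ with $|\proj_{\theta_0}(E_\infty)|>0$ forces $M_A$ and $M_B$ to factor into cyclotomic polynomials, pinning down their zero sets and yielding the power bound.

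In parts (ii) and (iii), the same framework is used with weaker structural input. Without a tiling hypothesis but with $|A|,|B|\leq 6$, one still has a cyclotomic factorization of $M_A$ and $M_B$, but the interaction of the two zero sets is no longer algebraic in a clean way; the quantitative cost of this loss manifests in the $1/\log\log n$ factor in the exponent of (\ref{e-blv}). For general similarity sets (iii), the measure $\mu_n$ is no longer a product, and one must iterate the self-similarity identity $\whm_\infty(\xi)=L^{-1}\sum_je^{2\pi i\langle z_j,\xi\rangle}\whm_\infty(\xi/L)$, extracting cancellation from a Diophantine analysis of the translations $z_j$. The main obstacle common to all four regimes is the same: one must quantify how the $L$-adic orbit $\{L^{-k}\xi(\cos\theta,\sin\theta)\}_{k<n}$ equidistributes relative to the bad set $\{|M_A|\approx 1\}\cup\{|M_B|\approx 1\}$. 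When this bad set is essentially a union of arithmetic progressions (part (i)), elementary counting suffices and one gets a power bound; when it is a more complicated algebraic variety (parts (ii), (iii)), the best available equidistribution produces only the weaker bounds shown.
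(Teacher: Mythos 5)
Your proposal omits the single most important ingredient of the Nazarov--Peres--Volberg framework, and as a result the logical direction of the argument is reversed. Cauchy--Schwarz on $\|f_\theta\|_1=1$ gives only the one-sided inequality $|\proj_\theta(E_n)|\geq\|f_\theta\|_2^{-2}$, which is a \emph{lower} bound on individual projections. There is no ``second-moment/orthogonality argument'' that converts this into an upper bound on $\mathrm{Fav}(E_n)$: a function of unit $L^1$ norm can perfectly well have both large support and large $L^2$ norm. The conversion actually used is a self-similarity argument (Proposition~\ref{green-discs} in the paper): if $\|f_{n,t}\|_2^2>K$ at some scale $n\leq N$, then by self-replication the stacking phenomenon propagates down the scales and forces $|\pi_t(E_{NK^3})|\leq C/K$. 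This dichotomy, applied to the ``bad'' set $\mathcal{E}=\{t:\|f_{n,t}\|_2^2\leq K\ \forall n\leq N\}$, is what makes the whole scheme work, and it has no counterpart in your write-up.

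Related to this, your stated reduction --- ``produce a set $S$ of directions of measure $\geq n^{-p}$ on which $\whm_n$ exhibits good Fourier decay along the ray'' --- is backwards. After Plancherel, the quantity one must bound from \emph{below} is $\|f_{n,t}\|_2^2=\int|\widehat{f_{n,t}}|^2$, i.e., one must show that the Fourier transform does \emph{not} decay too fast, so that a nontrivial amount of Fourier mass survives away from the origin (the reduction to $\int_{L^{-m}}^1|P_1P_2|^2\gtrsim KL^{-n}N^{-\alpha\epsilon_0}$ in Proposition~\ref{prop-reduction}). Your pigeonhole on scales, aimed at showing a positive fraction of the arguments $L^{-k}\xi\cos\theta$ land where $|M_A|\leq 1-c$, proves decay of $\whm_n$, which is the opposite of what is needed here. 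Finally, the source of the $1/\log\log n$ loss in part (ii) is misattributed: it comes not from ``interaction of the two zero sets'' of cyclotomic factors but specifically from the non-cyclotomic roots of $A(x)$, $B(x)$ on the unit circle and the reliance on Baker's theorem to separate their forward orbits under $z\mapsto z^L$; when those roots are absent one recovers the power bound. Likewise, part (iii) does not proceed via a Diophantine analysis of the translations $z_j$ but by a Poisson-localization variant giving the weaker SSV function $\psi(m)=L^{-cm^2}$. You should restructure the argument around the converse Proposition~\ref{green-discs}, the set $\mathcal{E}$, and the low/high frequency split $P=P_1P_2$ together with the SSV/SLV machinery.
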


The first general quantitative upper bound 
$Fav(E_n)\leq C\exp(-C\log^*n)$
was due to Peres and Solomyak \cite{PS1}; here,  $\log^*n$ denotes the number of iterations of the $\log$ function needed to have
$\log\dots\log n\lesssim 1$. (See also \cite{Tao} for a much weaker result in a more general setting.) The current wave of progress started with \cite{NPV}, where harmonic-analytic methods were first used in this context. The subsequent work in \cite{LZ}, \cite{BV1}, \cite{BV3}, \cite{BLV} followed the general strategy of \cite{NPV} up to a point, but also required additional new methods to deal with the increasing difficulty of the problem, especially in \cite{BLV}.

It is likely that the optimal upper bound for wide classes of self-similar sets should be ${\rm Fav}(E_n)\leq C_\epsilon  n^{-1+\epsilon}$ for all $\epsilon>0$. The example of the 4-corner set shows that the $n^\epsilon$ factors cannot be dropped. 
There are no known deterministic sets for which such an estimate is actually known, and this seems far out of reach with our current methods. The best available range of exponents in (\ref{e-npv}) at this point is $p<1/6$ (with the constant depending on $p$) for the 4-corner set \cite{NPV}.
However, Peres and Solomyak prove in \cite{PS1} that for ``random 4-corner sets"
the expected asymptotics is in fact $Cn^{-1}$. We present a simplified version of their argument here in Section \ref{PS-rewrite}.

The conditions on $A,B$ in Theorem \ref{thm-upper} (ii) are number-theoretic, and concern the roots of the polynomials
\begin{equation}\label{generating}
A(x)=\sum_{a\in A} x^a,\,\,\, B(b)=\sum_{b\in B} x^b
\end{equation}
on the unit circle. If no such roots exist, we have the power bound (\ref{e-npv}) and the restriction $|A|,|B|\leq 6$ is not necessary. Otherwise, we need more information.
Recall that for $s\in\nn$, the {\em $s$-th cyclotomic polynomial} $\Phi_s(x)$ is
\begin{equation}\label{cyclo}
\Phi_s(x):=\prod_{d:1\leq d\leq s,(d,s)=1}(x-e^{2\pi id/s}).
\end{equation}
Each $\Phi_s$ is an irreducible polynomial with integer coefficients whose roots are exactly the $s$-th roots of unity. We furthermore have the identity
$$
x^m-1=\prod_{s|m}\Phi_s(x).
$$
In particular, every $m$-th root of unity is a root of some cyclotomic polynomial $\Phi_s$ with $s|m$.

\begin{definition}\label{A1234}
We have $A(x)=\prod_{i=1}^4 A^{(i)}(x)$, where each $A^{(i)}(x)$ is a product of the irreducible factors of $A(x)$ in $\zz[x]$, defined as follows (by convention, an empty product is identically equal to 1):
\begin{itemize}
\item $A^{(1)}(x)=\prod_{s\in S_A^{(1)}}\Phi_s(x)$, $S_A^{(1)}=\{s\in\nn:\ \Phi_s(x)|A(x), (s,L)\neq 1\}$,
\item $A^{(2)}(x)=\prod_{s\in S_A^{(2)}}\Phi_s(x)$, $S_A^{(2)}=\{s\in\nn:\ \Phi_s(x)|A(x),$ $(s,L)=1\}$,
\item $A^{(3)}(x)$ is the product of those irreducible factors of $A(x)$ that have at least one root of the form $e^{2\pi i\xi_0}$, $\xi_0\in\rr\setminus\qq$ (we will refer to such roots as non-cyclotomic),
\item $A^{(4)}(x)$ has no roots on the unit circle.
\end{itemize}

\end{definition}

The factorization $B(x)=\prod_{i=1}^4 B^{(i)}(x)$ is defined similarly. 
We then have the following.

\begin{theorem}\label{blv-thm}
 (Bond-{\L}aba-Volberg \cite{BLV}) Let $E_n$ be a rational product set as in Example 3. Then the result of Theorem \ref{thm-upper} (ii) may be extended as follows:
 
 \smallskip\noindent
 (i) Assume that  $A^{(2)}\equiv B^{(2)}\equiv 1$. Then (\ref{e-blv}) holds regardless of the cardinalities of $A,B$.

\smallskip\noindent
(ii) Assume that  $|A|,|B|\leq 6$ and that $A^{(3)}\equiv B^{(3)}\equiv 1$. Then (\ref{e-blv}) can be improved to (\ref{e-npv}).

\smallskip\noindent
(iii) Assume that  $A^{(2)}\equiv B^{(2)}\equiv A^{(3)}\equiv B^{(3)}\equiv 1$. Then  (\ref{e-blv}) can be improved to (\ref{e-npv}), regardless of the cardinalities of $A,B$.

\smallskip\noindent
(iv) The condition that $|A|,|B|\leq 6$ in (ii), (iii) can be replaced by the implicit condition that
each of $A(x)$ and $B(x)$ 
satisfies the assumptions of Proposition \ref{compatible}. (If $|A|,|B|\leq 6$, then these assumptions are always satisfied.)

\end{theorem}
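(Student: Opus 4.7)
The plan is to follow and extend the Fourier-analytic framework of Nazarov--Peres--Volberg \cite{NPV}. Since $E_n$ is driven by $A, B \subset \{0,\dots,L-1\}$, the natural self-similar probability measure $\mu_n$ on $E_n$ has
$$\widehat{\mu_n}(\xi,\eta) = \prod_{k=1}^n \frac{A(e^{2\pi i\xi/L^k})\,B(e^{2\pi i\eta/L^k})}{L}.$$
I would begin by invoking the standard Cauchy--Schwarz / ``SLV-function'' reduction used in \cite{NPV}, \cite{LZ}, \cite{BV1}, which replaces the upper bound on ${\rm Fav}(E_n)$ by the task of exhibiting, for most $\theta \in [0,\pi]$, a sufficiently rich set of frequencies $t$ where $|\widehat{\mu_n}(t\cos\theta,t\sin\theta)|$ is small. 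The question thus becomes combinatorial: for which pairs $(t,\theta)$ and how many scales $k \in \{1,\dots,n\}$ is $t\cos\theta/L^k$ (resp.\ $t\sin\theta/L^k$) close to a root of $A(e^{2\pi i x})$ (resp.\ $B$)?

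Next I would use the factorization $A = A^{(1)}A^{(2)}A^{(3)}A^{(4)}$ to split this analysis according to the dynamics of each zero set under $z \mapsto z^L$:
\begin{itemize}
\item $A^{(4)}$ has no unit-circle roots and contributes only bounded multiplicative constants.
\item The roots of $A^{(1)}$, being $s$-th roots of unity with $(s,L)\neq 1$, collapse into a proper subgroup after finitely many iterations of $z\mapsto z^L$; a pigeonhole / tiling-compatibility argument in the style of \cite{NPV}, \cite{LZ} handles them and yields a power-type contribution.
\item The roots of $A^{(2)}$, being $s$-th roots of unity with $(s,L)=1$, are cyclically permuted by $z\mapsto z^L$ and never escape the zero set. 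Here one must apply a Galois-averaging argument over the conjugate characters of $\Phi_s$; this is the source that forces the cardinality restriction $|A|,|B|\leq 6$ (or, more flexibly, the structural replacement hypothesis of Proposition \ref{compatible}).
\item The roots of $A^{(3)}$ are non-cyclotomic points $e^{2\pi i\xi_0}$ with $\xi_0\in\rr\setminus\qq$; their orbits under $z\mapsto z^L$ are Weyl-equidistributed, but the quantitative Diophantine rates degrade with scale, and the resulting control only gives the weaker $n^{-p/\log\log n}$ bound.
\end{itemize}

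The four parts of the theorem then assemble as follows. For (i), the hypothesis $A^{(2)}\equiv B^{(2)}\equiv 1$ removes the source that requires a cardinality bound, while the $A^{(3)}$ contribution still leaves the $\log\log n$ loss of (\ref{e-blv}); hence the estimate holds unconditionally in $|A|,|B|$. For (ii), the hypothesis $A^{(3)}\equiv B^{(3)}\equiv 1$ removes the Diophantine source of loss, and the cardinality bound $|A|,|B|\leq 6$ lets the Galois averaging on $A^{(2)}, B^{(2)}$ produce the clean power bound (\ref{e-npv}). Part (iii) combines the two simplifications and needs neither the cardinality bound nor the $\log\log n$ concession. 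In (iv), I would replace the cardinality bound by the condition of Proposition \ref{compatible}, which is designed precisely to supply the combinatorial input that the Galois argument extracts from small cardinality; verifying that $|A|,|B|\leq 6$ implies this hypothesis is a separate and largely case-by-case check.

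The hardest step will be the sharp treatment of $A^{(2)}$: one has to run a multi-scale stopping-time argument that distinguishes scales at which $|\widehat{\mu_n}|$ is near-maximal (controlled by the $L$-orbit of the zero set of $A^{(2)}$) from those where it decays, and sum the resulting contributions without accumulating worse than $\log\log n$ loss in part (i), and without any loss at all in parts (ii)--(iv). Ensuring that Proposition \ref{compatible} genuinely substitutes for the cardinality bound at this combinatorial step, and that no harmful interaction occurs between the $A^{(2)}$ and $A^{(3)}$ sources when both are present, is the secondary technical hurdle.
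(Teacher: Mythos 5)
Your high-level picture is right: reduce to a Fourier-side trigonometric polynomial estimate, factor $A$ and $B$ as in Definition~\ref{A1234}, and track which factor type is responsible for the cardinality restriction ($A^{(2)},B^{(2)}$) and which for the $\log\log n$ loss ($A^{(3)},B^{(3)}$). The logic of how (i)--(iv) assemble from these two sources is also correct. However, there are genuine gaps in the mechanisms you propose, two of which would derail the proof.

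First, the reduction is stated backwards. Proposition~\ref{prop-reduction} asks you to establish a \emph{lower} bound $\int_{L^{-m}}^1 \prod_j|\phi_t(L^j\xi)|^2\,d\xi \geq cKL^{-n}N^{-\alpha\epsilon_0}$ for some $t\in\mathcal{E}$; that is, to show the Fourier transform is \emph{not too small} away from the peak at $0$, which forces $\|f_n\|_2$ to be large and contradicts membership in the bad set $\mathcal{E}$. Seeking ``a rich set of frequencies where $|\widehat{\mu_n}|$ is small'' inverts the direction of the estimate. Relatedly, you attribute an ``SLV-function reduction'' to \cite{NPV}, \cite{LZ}, \cite{BV1}, but those papers use the SSV (Set of Small Values) method; the SLV method appears only in \cite{BLV}.

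Second, and more consequentially, the actual treatment of $A^{(2)},B^{(2)}$ in \cite{BLV} is not a ``Galois-averaging over conjugate characters'' or a ``multi-scale stopping-time argument.'' Because the roots of $\Phi_s$ with $(s,L)=1$ are recurrent under $z\mapsto z^L$, the product $P_2$ acquires zeroes of multiplicity comparable to $m$, which kills any usable SSV estimate. The method that replaces it is the SLV difference-set construction: one builds $\Gamma\subset[0,1]$ with $|\Gamma|\gtrsim KL^{-m}$ such that $\Gamma-\Gamma$ avoids neighborhoods of the offending zeroes (so $|P_2|\geq L^{-C_1 m}$ on $\Gamma-\Gamma$), and then reruns Salem's positivity argument with $h=|\Gamma|^{-1}\one_\Gamma*\one_{-\Gamma}$ in place of a triangle function. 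The number theory enters in choosing $\Gamma$ as a neighborhood of a coset progression $\frac{1}{Q}\zz$ that lies safely away from the roots of $\Phi_s$ for all $s\in S_A^{(2)}$; Proposition~\ref{compatible} ($N=PQ$, no $s\in S_A$ divides $Q$, $|A|>P$) is exactly the condition guaranteeing such a $Q$ exists, and the results of de Bruijn and Lam--Leung on vanishing sums of roots of unity are what let one verify it automatically when $|A|,|B|\leq 6$. Without this explicit construction, there is no route to (\ref{good est}) and the argument collapses. Your treatment of $A^{(3)}$ is also slightly off target: the paper invokes Baker's theorem to get a lower bound $|\xi_0-a/q|\gtrsim q^{-\alpha}$ for irrational $\xi_0$ with $e^{2\pi i\xi_0}$ algebraic, which controls the \emph{separation} of zeroes of $P_2$ under rescaling and yields the weaker ``log-SSV'' property; equidistribution of orbits is not the mechanism, though you are correct that the Diophantine quality of the bound is what produces the $n^{-p/\log\log n}$ loss.
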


My goal here is to present some of the main ideas behind Theorems \ref{thm-upper} and \ref{blv-thm}, with emphasis on the number-theoretic considerations in Section \ref{sec-nt}. For the most part, I will focus on the rational product set case in Theorem \ref{thm-upper} (ii) and present the calculations in this case in some detail. The modifications needed to cover the other cases of Theorem \ref{thm-upper} will only be mentioned briefly.

The outline is as follows. In Section \ref{sec-reductions}, we set up the Fourier-analytic machinery and reduce the problem to a trigonometric polynomial estimate. This was first done in \cite{NPV}, and repeated with only minor modifications in the subsequent papers. We only sketch the arguments here, and refer the reader to \cite{BThesis} for a more detailed exposition.

In Section \ref{sec-trig}, we begin to work towards proving the main estimate. The reductions in Section \ref{p1p2} and Lemma \ref{poisson} are due to \cite{NPV}, with minor modifications in \cite{BV1}, \cite{BV3}, \cite{LZ}. The remaining issue concerns integrating a certain exponential sum on a set where another exponential sum, which we will call $|P_2(\xi)|^2$, is known to be bounded from below away from 0. 
We present two approaches to this: the SSV (Set of Small Values) method of \cite{NPV}, and the SLV (Set of Large Values) method of \cite{BLV}. 
 
In Section \ref{sec-nt}, we discuss the number-theoretic aspects of the problem for the rational product sets under consideration. The relevant facts concern the zeroes of $A(x)$ and $B(x)$ on the unit circle, and the behaviour of such zeroes under iterations of the mapping $\mathcal{L}:\ z\to z^{L}$. 
This depends very strongly on the factorization of $A$ and $B$ given in Definition \ref{A1234}. 
The factors $A^{(4)}$, $B^{(4)}$ are completely harmless and may be ignored safely.
For simplicity, we will focus on specific examples with only one type of remaining factors present.
In the case of sets as in Theorem \ref{thm-upper} (ii) and Theorem \ref{blv-thm}, the main exponential sum estimate turns out to hold for each type of factors, but for very different reasons. 

The zeroes of the ``good" cyclotomic factors $A^{(1)}$, $B^{(1)}$ are extremely well behaved under iterations of $\mathcal{L}$, in the sense that their orbits hit 1 after a finite number of iterations. It follows that the backward orbits under the multi-valued inverse mapping $\mathcal{L}^{-1}$ are well dispersed throughout the unit circle with no recurring points, a property that the SSV method relies on. This approach originated in \cite{NPV} in the special case of the 4-corner set, then was gradually expanded to its current generality in \cite{LZ} and \cite{BLV} (a similar argument appears also in \cite{BV1}).

Similar behaviour is observed for the roots of $A^{(3)}$ and $B^{(3)}$, but this is a much deeper fact related to Baker's theorem on rational approximation of logarithms of algebraic numbers; moreover, the quantitative estimates in this case are somewhat weaker, leading to the loss of $\log \log n$ in the exponent. This was done in \cite{BLV}.

The factors $A^{(2)}$, $B^{(2)}$ require a very different approach, also developed in \cite{BLV}. The roots of these factors are recurrent under iterations of $\mathcal{L}$, which leads to our exponential sum $|P_2(\xi)|^2$ having many roots of very high multiplicity. This can be handled by the SLV method, under additional number-theoretic conditions relating the structure of $S_A^{(2)}$ to the size of the set $A$. It turns out that the information we need is closely related to classical results in number theory (due to many authors) on vanishing sums of roots of unity. In Section \ref{sec-roots} we provide some of the background on this and explain the relation to our problem. 
We do not know whether this approach can suffice to prove Theorem \ref{thm-upper} (ii) for general rational product sets without the size restrictions, but we state a conjecture that, if true, would complete this part of the program.

Finally, in Section \ref{PS-rewrite} we define the random 4-corner sets of \cite{PS1} and present a proof of a Favard length estimate in this case.


\section{The Fourier-analytic approach}\label{sec-reductions}

It will be convenient to work with slopes instead of angles in the definition of Favard length (\ref{favard}). We will consider only $\theta\in[0,\pi/4]$ (the full range of $\theta$ is covered by 8 such angular segments, and our estimates will apply to all of them by symmetry), and define
$$\pi_t(x,y)=x+ty,\ t=\tan\theta.$$
Then it suffices to estimate 
\begin{equation}\label{favard-t}
\int_0^1 |    \pi_t(E_n)      |dt
\end{equation}
which is equivalent to estimating the averages of $\proj_\theta(E_n)$ up to constants.

Next, we define a ``counting function" $f_{n,t}$. Let $\mu_n$ be the 2-dimensional Lebesgue measure restricted to $E_n$, so that 
$$d\mu_n = \one_{E_n}dx$$ 
Then $L^n \mu_n$ converge weakly to the 1-dimensional Hausdorff measure $\mu_\infty$ on $E_\infty$, normalized so that $\mu_\infty(E_\infty)=1$, but we will not use this as it is the finite iterations that we are concerned with. For a given slope $0\leq t\leq 1$, consider the projected measures
$$\pi_t\mu_n(X)=\mu_n(\pi_t^{-1}(X)),\ \ X\subset \rr$$
and define $f_{n,t}$ to be the density of $\pi_t \mu_n$. Essentially, $f_{n,t}$ counts the number of squares in $E_n$ that get projected to $x$:
\begin{equation}\label{e-fn}
f_{n,t}=\sum_{a\in A_n,\, b\in B_n}\delta_{a+tb}*\chi(L^n\cdot),
\end{equation}
where 
$$A_n=\sum_{j=1}^n L^{-j}A,\ \ B_n=\sum_{j=1}^n L^{-j}B,$$
and $\chi(\cdot)$ is the density of the projection (with the given $t$) of the Lebesgue measure on the unit square. The exact form of the function $\chi$ is not important, in fact we could replace it by $\one_{[0,1]}$ and get estimates equivalent up to constants. For simplicity, we will write $f_{n,t}=f_n$ whenever displaying the dependence on $t$ is not necessary.

\begin{figure}[htbp]\centering\includegraphics[width=4.in]{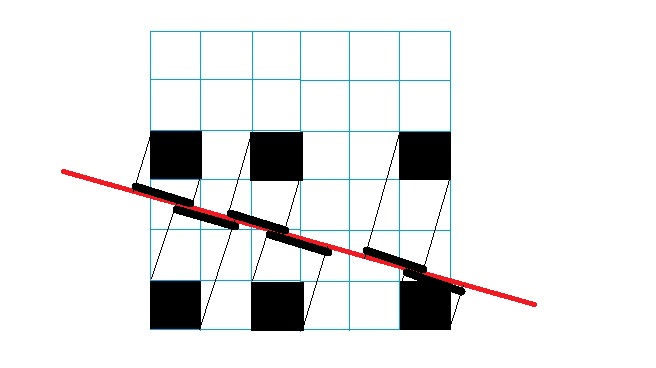}
\caption{The projected measure $\pi_t\mu_1$ for a product Cantor set}
\end{figure}

The $L^1$ norm of $f_n$ does not depend on $n$: $\|f_n\|_1=1$, uniformly in $t$. It is the higher $L^p$ norms that carry useful information. We will consider $p=2$, since this is particularly well suited to the Fourier analytic methods we wish to use. 

Heuristically, $\|f_n\|_2$ large should
corresponds to significant overlap between the projected squares. It is easy to make this argument rigorous in one direction: since $f_n$ is supported on $\pi_t(E_n)$, by H\"older's inequality 
$$1=\|f_n\|_1\leq \|f_n\|_2\cdot|\pi_t(E_n)|^{1/2}.$$
Hence if $|\pi_t(E_n)|$ is small, then $\|f_n\|_2$ must be large.

In general, H\"older's inequality only works in one direction. It is quite possible for a function with $\|f\|_1=1$ to have both large support and large $L^2$ norm. However,  in the particular case of self-similar sets considered here, there is a partial converse due to \cite{NPV}. 

\begin{proposition}\label{green-discs}
For each $t\in[0,1]$ at least one of the following holds:
\begin{itemize}
\item $\|f_{n}\|_2^2\leq K$ for all $n\leq N$,
\item $|\pi_t(E_{NK^3})| \leq C/K$.
\end{itemize}
\end{proposition}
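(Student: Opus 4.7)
I would prove the contrapositive. Assume the first alternative fails, so there exists $n_0 \leq N$ with $\|f_{n_0, t}\|_2^2 > K$; the goal is then to show $|\pi_t(E_{NK^3})| \leq C/K$.

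The starting point is the self-similar refinement identity derived from $E_{n+m} = \bigcup_{(a,b) \in A_n \times B_n}((a,b) + L^{-n} E_m)$, namely
$$f_{n+m, t}(x) = \sum_{(a, b) \in A_n \times B_n} f_{m, t}\bigl(L^n(x - a - tb)\bigr),$$
which writes $f_{n+m, t}$ as a sum of $L^n$ translated, rescaled copies of $f_{m, t}$. Iterating this $k$ times with $n = n_0$, each level-$k n_0$ label sits at a position of the form $c_1 + L^{-n_0} c_2 + \cdots + L^{-(k-1) n_0} c_k$ with $c_j$ a level-$n_0$ center, and the multiplicity at that position is the product $m(c_1) m(c_2) \cdots m(c_k)$ of the level-$n_0$ multiplicities. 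Summing squares over all positions and observing that accidental cross-scale coincidences only merge labels (and so can only increase $\sum m^2$), I expect to obtain $\|f_{k n_0, t}\|_2^2 \geq \|f_{n_0, t}\|_2^{2k} > K^k$.

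The main obstacle will be converting this amplified $L^2$ concentration into an upper bound on the support itself, since H\"older only yields the lower bound $|\pi_t(E_{k n_0})| \geq 1/\|f_{k n_0, t}\|_2^2$. My plan is to dyadically decompose $f_{k n_0, t}$ by density level, use Chebyshev to control the high-density set ($|\{f_{k n_0, t} > \lambda\}| \leq 1/\lambda$), and bound the low-density set $\{0 < f_{k n_0, t} \leq \lambda\}$ recursively via the refinement identity: on a level-$(k-1) n_0$ cluster of density $\sim \tau$, the level-$k n_0$ density is $\tau$ times a scaled copy of $f_{n_0, t}$, so inheriting its cluster structure shrinks the low-density support by a factor $\sim 1/K$ per $n_0$ further levels. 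After $k \sim K^2$ such refinements --- spanning $k n_0 \leq NK^2$ levels, with the remaining factor of $K$ in $NK^3$ absorbing the dyadic-pigeonhole losses --- the two pieces combine to $|\pi_t(E_{k n_0})| \leq C/K$, and monotonicity $|\pi_t(E_{NK^3})| \leq |\pi_t(E_{k n_0})|$ closes the argument. The hard part is this conversion: a large $L^2$ norm is by itself compatible with heavy-tailed densities of arbitrary support, so the argument must genuinely use the rigidity of the self-similar cluster hierarchy, through repeated application of the refinement identity, to force the support to contract rather than merely letting the $L^2$ norm grow.
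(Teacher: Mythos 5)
The paper does not prove Proposition~\ref{green-discs}; it cites \cite{NPV} and refers to Chapter~1 of \cite{BThesis} for the argument (the ``micro-theorem converse''). So the comparison here is between your sketch and what those sources actually do, and I think your second step has a genuine gap. Your first step --- the self-similar refinement identity and the supermultiplicativity $\|f_{kn_0,t}\|_2^2 \gtrsim \|f_{n_0,t}\|_2^{2k}$ --- is sound in spirit (the cross terms in the expansion of $\|\sum_{(a,b)} f_m(L^n(\cdot - a - tb))\|_2^2$ are nonnegative because $f_m \geq 0$, and one does get a product lower bound after accounting for the $\widehat{\chi}$ factors), though one should note the trivial ceiling $\|f_{kn_0}\|_2^2 \leq CL^{kn_0}$ means the exponential growth $K^k$ cannot be pushed past a scale where $K^k \sim L^{kn_0}$, which already constrains how the argument can be closed.

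The serious problem is the conversion step. You correctly flag that H\"older only gives $|\pi_t(E_n)| \geq 1/\|f_n\|_2^2$, and you correctly flag that a large $L^2$ norm is by itself compatible with large support; but the mechanism you then propose does not close this gap. Concretely: a function $f$ with $\|f\|_1 = 1$ and $\|f\|_2^2 \sim K$ (or even $K^k$) can consist of a narrow spike carrying tiny $L^1$-mass together with a low-density tail of measure $\sim 1$, so neither Chebyshev (which only controls $\{f > \lambda\}$, a set of measure $\leq 1/\lambda$ that may carry negligible mass) nor any growth of the $L^2$ norm constrains the low-density set $\{0 < f \leq \lambda\}$. Your claim that ``inheriting its cluster structure shrinks the low-density support by a factor $\sim 1/K$ per $n_0$ further levels'' is exactly the point in need of justification, and the refinement identity alone does not give it: within a scale-$(k-1)n_0$ cluster, the scale-$kn_0$ projection is a rescaled copy of $\pi_t(E_{n_0})$, whose measure is only bounded \emph{below} by $1/K$, not above, so there is no contraction. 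The argument in \cite{NPV} and \cite{BThesis} is not an $L^2\Rightarrow$ support argument at all; it is a geometric stacking argument. Roughly, large $\|f_{n_0}\|_2^2$ forces many pairs of scale-$n_0$ squares to project to overlapping intervals, self-similarity forces that overlap pattern to replicate inside every scale-$n_0$ square at all finer scales, and after $O(K^2)$ generations of $n_0$ levels the cumulative overlaps have ``consumed'' all but a $C/K$-fraction of the projected set. That covering/propagation step is the content of the proposition, and it is absent from your proposal; the exponent $K^3$ in $NK^3$ emerges from that bookkeeping, whereas your choice $k \sim K^2$ is not derived from anything in the sketch.
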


Here $K$ is a large constant of order roughly $N^{\epsilon_0}$ (more on this shortly).
Roughly speaking, lower bounds on $\|f_n\|_2$ imply upper bounds on the size of the support of $f_N$ with $N\gg n$.
The main idea is that if ``stacking" (a large number of squares projected to the same point) occurs on some scale $L^{-n}$, then by self-similarity that phenomenon has to replicate itself throughout the set in its higher iterations, until it consumes most of the set on some much smaller scale $L^{-NK^3}$. The interested reader can find a detailed and accessible exposition in \cite{BThesis}. The ``micro-theorem converse" in Chapter 1 of \cite{BThesis} is especially recommended, as it provides a glimpse into the main idea of Proposition \ref{green-discs}, one of the essential ingredients of this approach, with only a minimum of technicalities.

We are therefore interested in lower bounds on $\|f_n\|_2$. A quick glance at Figure \ref{fig-4} again should convince the reader that there is no chance of proving non-trivial lower bounds for all $t$: if $E_n$ is the $n$-th iteration of the 4-corner set and $t=1/2$, then $\|f_n\|_2 \approx 1$ for all $n$. Similarly, $\|f_n\|_2$ can easily remain bounded in other cases where $\pi_t(E_\infty)$ has positive measure. Therefore we will instead consider the set of ``bad" directions
$$
\mathcal{E}=\{t\in[0,1]:\ \  \|f_n\|_2^2\leq K\ \ \text{for all}\ \  1\leq n\leq N\}
$$ 
for some large $K$, and attempt to prove that $|\mathcal{E}|$ is small. The choice of $K$ here depends on $A$ and $B$. If $A^{(3)}(x)=B^{(3)}(x)\equiv 1$, we will run the argument with $K:=N^{\epsilon_0}$ for some $\epsilon_0>0$; otherwise, we will have $K:=N^{\epsilon_0/\log\log N}$.

We now take Fourier transforms: since $\|f_n\|_2= \|\widehat{f_n}\|_2$, it suffices to estimate the latter from below. This is where the polynomials $A(x),B(x)$ come in. 
Define
$$\phi_A(\xi)=\frac{1}{|A|}A(e^{2\pi i\xi})=\frac{1}{|A|}\sum_{a\in A}e^{2\pi i a\xi}$$
and similarly for $B$. Let also 
$$
\phi_t(\xi):=\phi_A(\xi)\phi_B(t\xi)=\frac1{L}\sum_{(a,b)\in A\times B}e^{2\pi i(a+tb)\xi}.
$$
Then 
\begin{align*}
\widehat{f_n}(\xi)&=L^n \sum_{a\in A_n,\ b\in B_n} e^{2\pi i (a+tb)\xi} \ \widehat{\chi}(L^{-n}\xi)
\\
&=\prod_{j=0}^{n-1}\phi_t(L^{-j}\xi)\,\widehat{\chi}(L^{-n}\xi)
\end{align*}
The last term acts essentially as a cut-off function supported on $[-L^{n},L^n]$. 

We omit the calculations and pigeonholing steps required at this point. We also pass from the set of bad directions $\mathcal{E}$ to a certain subset of it of proportional size; for simplicity, we will continue to denote this set by $\mathcal{E}$ here. Finally, we rescale the resulting Fourier transforms so that the relevant integrals live on the interval [$0,1]$. The conclusions are summarized in the next proposition.

\begin{proposition}\label{prop-reduction}
To prove Theorem \ref{thm-upper}, we need only prove the following:
Let $\epsilon_0>0$ be sufficiently small, $\frac{N}{4}\leq n\leq \frac{N}{2}$, and assume that 
\begin{equation}\label{size-e}
|\mathcal{E}|\geq K^{-1/2}
\end{equation}
Then there is a
$t\in\mathcal{E}$ such that 
\begin{equation}\label{intro-e1}
\int_{L^{-m}}^1 \prod_{j=1}^{n} |\phi_t(L^{j}\xi)|^2 d\xi
\geq cKL^{-n}N^{-\alpha\epsilon_0}
\end{equation}
for some constants $c,\alpha>0$, depending on $A$ and $B$ but not on $\epsilon_0$.
\end{proposition}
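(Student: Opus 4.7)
The plan is to bridge the $L^2$-control of $f_{n,t}$ on $\mathcal{E}$ to the trigonometric integral via Plancherel, and then extract a specific good direction by a pigeonhole argument. First, applying Plancherel and the substitution $\eta = L^{-n}\xi$ to the identity $\widehat{f_{n,t}}(\xi) = \prod_{j=0}^{n-1} \phi_t(L^{-j}\xi)\,\widehat{\chi}(L^{-n}\xi)$ yields
$$\|f_{n,t}\|_2^2 = L^n \int_\rr \prod_{k=1}^{n} |\phi_t(L^k \eta)|^2 \,|\widehat{\chi}(\eta)|^2\, d\eta.$$
Since $\widehat{\chi}$ is Schwartz and bounded away from zero near the origin, I would replace $|\widehat{\chi}(\eta)|^2$ by $\one_{[0,1]}(\eta)$ up to absolute constants, and split off the small-frequency range $\eta \in [0, L^{-m}]$. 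On this range all factors $|\phi_t(L^k \eta)|^2$ lie within $O(L^{2(k-m)})$ of $1$, so the contribution is at most $L^{-m}(1+o(1))$, which is negligible once $m$ exceeds $n$ by roughly $\alpha\epsilon_0 \log_L N$. This reduces the claim to finding $t \in \mathcal{E}$ and $n \in [N/4, N/2]$ with $\|f_{n,t}\|_2^2 \gtrsim K N^{-\alpha\epsilon_0}$.

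To produce such a pair $(t,n)$, I would argue by contrapositive. Suppose $\|f_{n,t}\|_2^2 < cKN^{-\alpha\epsilon_0}$ uniformly on $\mathcal{E} \times [N/4, N/2]$. Then on $\mathcal{E}$ the effective $L^2$-threshold can be sharpened from $K$ to $K' := cKN^{-\alpha\epsilon_0}$. Reapplying Proposition \ref{green-discs} with $K'$---after propagating the bound to all $n \le N$ via the self-similar estimate $\|f_{n+1}\|_2^2 \le C(A,B)\|f_n\|_2^2$ at intermediate scales and the trivial $\|f_n\|_2^2 \le L^n$ at small scales---would give $|\pi_t(E_{NK'^3})| \le C/K'$ on a set containing $\mathcal{E}$. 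Combining this with the trivial bound $|\pi_t(E_n)| \le 1$ off $\mathcal{E}$ produces a Favard-length estimate for $E_{NK^3}$ that is too small to be consistent with the hypothesis $|\mathcal{E}| \ge K^{-1/2}$, giving the desired contradiction and hence the existence of the good $(t,n)$.

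The main obstacle is exactly this bootstrap: converting a scale-localized $L^2$-bound on $[N/4, N/2]$ into one valid on the full range $[1,N]$ without paying more than a factor of $N^{\alpha\epsilon_0}$. Here the self-similar structure of the sequence $\{f_{n,t}\}$ enters crucially---the ratio $\|f_{n+1,t}\|_2^2/\|f_{n,t}\|_2^2$ is controlled by $\phi_t$-type quantities, and tracking this propagation without incurring extra $N^{\epsilon_0}$ losses is what fixes the exponent $\alpha$ in the statement. A secondary complication, alluded to in the paper's parenthetical "we also pass from the set of bad directions $\mathcal{E}$ to a certain subset of it of proportional size," is the pigeonhole that simultaneously localizes both the direction $t$ and the scale $n$ while preserving a constant fraction of $|\mathcal{E}|$; this is routine in spirit but must be performed with quantitative care for all the $\epsilon_0$-losses to combine cleanly.
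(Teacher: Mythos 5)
The paper does not actually give a proof of Proposition~\ref{prop-reduction}; it explicitly says ``We omit the calculations and pigeonholing steps required at this point'' and refers the reader to \cite{BThesis}. So there is no argument in this paper to compare against line by line; the comparison must be with the surrounding structure, and that is where your proposal goes wrong.

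There is a fundamental misreading of the parameters. You write that the low-frequency range $[0,L^{-m}]$ contributes at most $L^{-m}(1+o(1))$ and that this ``is negligible once $m$ exceeds $n$ by roughly $\alpha\epsilon_0\log_L N$.'' But in this paper $m$ is \emph{much smaller} than $n$: the text right after the Proposition states $m=c_0\log N$ (or $c_0\log N/\log\log N$), while $n$ is of order $N$. With $m\ll n$, the bound $L^{-m}$ on the low-frequency contribution is astronomically larger than the target $cKL^{-n}N^{-\alpha\epsilon_0}$, so the step ``the low-frequency piece is negligible'' collapses. Your assertion that ``all factors $|\phi_t(L^k\eta)|^2$ lie within $O(L^{2(k-m)})$ of $1$'' is only valid for $k<m$; for $m<k\le n$ (the vast majority of factors) nothing of the sort holds. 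In fact, taming the low-frequency peak is the \emph{central} difficulty, and the paper handles it only through the Poisson localization lemma (Lemma~\ref{poisson-lemma}) applied to the high-frequency factor $P_1$ alone, and then through the entire SSV/SLV machinery of Sections~3--4. One cannot simply discard $[0,L^{-m}]$; even the sharp bound $\int_0^{L^{-m}}|P_1|^2\le C_0 K L^{-n}$ from Lemma~\ref{poisson-lemma} is already of the same order as the right-hand side of \eqref{intro-e1}, which is why the constant comparison $C>C_0$ is so carefully tracked in the SLV argument.

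The second paragraph also has gaps. First, there is a scope confusion: Proposition~\ref{prop-reduction} is a \emph{reduction} --- it asserts that establishing \eqref{intro-e1} suffices for Theorem~\ref{thm-upper} --- whereas you are attempting to prove the inner statement \eqref{intro-e1} itself, which is the work of Sections 3 and 4, not of this Proposition. Second, even within your scheme the bootstrap fails quantitatively: propagating $\|f_{n,t}\|_2^2< cKN^{-\alpha\epsilon_0}$ from $n\in[N/4,N/2]$ to all $n\le N$ via repeated application of $\|f_{n+1}\|_2^2\le C\|f_n\|_2^2$ accumulates a factor $C^{cN}$, an exponential loss that destroys the polynomial gain you are trying to protect. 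Finally, the claimed contradiction at the end (a Favard length estimate for $E_{NK^3}$ ``too small to be consistent with $|\mathcal{E}|\ge K^{-1/2}$'') is not a contradiction at all, since $|\pi_t(E_{NK^3})|$ can be arbitrarily small on a set of directions of measure $\ge K^{-1/2}$ without any inconsistency.
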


Here $m$ is much smaller than $n$: if $A^{(3)}\equiv B^{(3)}\equiv 1$, we let $m= c_0\log N$ (rounded to an integer), otherwise $m=c_0\frac{\log N}{\log\log N}$.

We have not discussed the case of general self-similar sets as in Theorem \ref{thm-upper} (iii), where the similarity centers do not necessarily form a rational product set. For such sets, the same general outline is followed so far and Proposition \ref{prop-reduction} still applies. We still have a counting function $f_{n,t}$ and a trigonometric polynomial $\phi_t(\xi)$, although $\phi_t$ need not factorize as above and the powers of $e^{2\pi i\xi}$ need not be integer. We set $K\approx \exp(\epsilon_0\sqrt{\log N})$ and $m\approx \epsilon_0\sqrt{\log N}$, which at the end of the day yields the bound $\exp(-c\sqrt{\log n})$ on the Favard length of $E_n$.

\bigskip


\section{Trigonometric polynomial estimates}
\label{sec-trig}

\subsection{The separation of frequencies}
\label{p1p2}

Before we proceed further, it is important to understand a major issue arising in (\ref{intro-e1}). At first sight, there might seem to be little difference between the integral in (\ref{intro-e1}) and a similar integral taken from 0 to 1. These two integrals, however, may actually behave very differently, depending on the trigonometric polynomial being integrated.

As a warm-up, we will try to estimate $\int_0^1 |P(\xi)^2 d\xi$, where
$$
P(\xi)=\prod_{j=1}^n \phi_t(L^j\xi).
$$
The following argument is based on an idea of Salem. We write $P(\xi)$ as a long trigonometric polynomial $P(\xi)=L^{-n}\sum_{\alpha\in \mathcal{A}}
e^{2\pi i\alpha\xi}$, where $\mathcal{A}\subset\rr$ and $|\mathcal{A}|=L^{n}$. (Note that $\alpha$ need not be integer, otherwise we could simply evaluate the integral directly.)
We have $P(\xi)=\overline{P(-\xi)}$, so that $\int_0^1|P|^2=\frac{1}{2}\int_{-1}^1|P|^2$.
Let $h(x)=\one_{[0,1/2]}*\one_{[-1/2,0]}$, then $0\leq h\leq C$, $\supp h\subset [-1,1]$ and $\widehat{h}\geq 0$. Therefore
\begin{equation}\label{intro-e2a}
\begin{split}
\int_0^1|P(\xi)|^2d\xi &\geq C^{-1}\int_{-1}^1 |P(\xi)|^2h(\xi)d\xi\\
&=C^{-1}L^{-2n} \sum_{\alpha,\alpha'}\int h(\xi)e^{2\pi i(\alpha-\alpha')\xi}d\xi\\
&=C^{-1}L^{-2n} \sum_{\alpha,\alpha'}\widehat{h}(\alpha-\alpha')\\
&\geq C^{-1}L^{-2n} \sum_{\alpha=\alpha'}\widehat{h}(0)\\
&\geq C^{-1} L^{-2n}|\mathcal{A}|=C^{-1} L^{-n}.\\
\end{split}
\end{equation}
This would be a perfectly good bound for us to use, if we could get it for the slightly smaller interval in (\ref{intro-e1}) instead. There, however, lies the crux of the matter. Many trigonometric polynomials peak out at 0, then become very small outside a neighbourhood of it. (This would for example happen with high probability for $P(\xi)=L^{-n}\sum_{\alpha\in \mathcal{A}}
e^{2\pi i\alpha\xi}$ if the exponents $\alpha$ were chosen at random from some large interval, instead of being given by a self-similar set.) The inequality (\ref{intro-e2a}) is therefore of little use to us, given that the main contribution to the integral is expected to come from the peak at 0, and we are trying to bound from below a part of the integral that could well be much smaller by comparison.

The reader might ask at this point why we need to integrate on $[L^{-m},1]$ instead of $[0,1]$ in the first place. This comes from the pigeonholing arguments that we skipped in the previous section. Roughly, if $P(\xi)$ were indeed too close to a rescaled Dirac delta function at 0, then $f_n$ (as its rescaled inverse Fourier transform) would be close to a constant function, as in Figure \ref{fig-4}. This is exactly the type of behaviour that we are trying to disprove, or at least confine to a small set of projection angles. Instead, we are looking for irregularities of distribution of $f_n$, associated with somewhat large values of $P(\xi)$ away from 0.

If we separate $P(\xi)$ into low and high frequencies, it turns out that we have much better control of the high frequency part of $P$. We will write $P=P_1P_2$, where 
$$
P_1(\xi)=\prod_{j=m+1}^n \phi_t(L^j\xi),\ \ 
P_2(\xi)=\prod_{j=1}^m \phi_t(L^j\xi),
$$
It is immediate to verify that the argument in (\ref{intro-e2a}) also yields
\begin{equation}\label{intro-e2}
 \int_0^1|P_1(\xi)|^2d\xi \geq C^{-1} L^{m-n}.
\end{equation}
Crucially, now we also have some control of what happens on $[L^{-m},1]$. 

\begin{lemma}\label{poisson-lemma}
For $t\in \mathcal{E}$, we have
\begin{equation}\label{poisson}
I_0:= \int_0^{L^{-m}}|P_1|^2\leq C_0KL^{-n}.
\end{equation}
\end{lemma}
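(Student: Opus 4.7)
The plan is to recognize $|P_1(\xi)|^2$ on the small interval $[0,L^{-m}]$ as a rescaled copy of $|\widehat{f_{n-m}}(\eta)|^2$ on the much larger interval $[0,L^{n-m}]$, and then to apply Plancherel together with the defining property of $\mathcal{E}$.

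First, I would make the change of variable $\eta = L^n\xi$, under which
$$
P_1(\xi) = \prod_{j=m+1}^{n}\phi_t(L^j\xi) = \prod_{k=0}^{n-m-1}\phi_t(L^{-k}\eta),
$$
the indices matching up via $k = n-j$. Comparing with the formula for $\widehat{f_{n-m}}$ derived in Section \ref{sec-reductions},
$$
\widehat{f_{n-m}}(\eta) = \prod_{k=0}^{n-m-1}\phi_t(L^{-k}\eta)\,\widehat{\chi}\bigl(L^{-(n-m)}\eta\bigr),
$$
one sees that $P_1(\xi) = \widehat{f_{n-m}}(L^n\xi)\big/\widehat{\chi}(L^m\xi)$. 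For $\xi\in[0,L^{-m}]$ the argument $L^m\xi$ lies in $[0,1]$, on which $|\widehat{\chi}|$ is bounded below by a positive constant $c_0$. (This can be arranged by exploiting the freedom in the choice of $\chi$ noted after (\ref{e-fn}); should $\widehat{\chi}$ have incidental zeros in $[0,1]$, one replaces the sharp cutoff $\mathbf{1}_{[0,L^{-m}]}$ by a slightly smaller smooth variant, at the cost of an inessential constant factor.)

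Combining the resulting pointwise bound $|P_1(\xi)|^2 \leq c_0^{-2}|\widehat{f_{n-m}}(L^n\xi)|^2$ with the change of variable and Plancherel's theorem,
$$
I_0 = \int_0^{L^{-m}}|P_1(\xi)|^2\,d\xi \leq c_0^{-2}\,L^{-n}\!\int_0^{L^{n-m}}|\widehat{f_{n-m}}(\eta)|^2\,d\eta \leq c_0^{-2}\,L^{-n}\,\|f_{n-m}\|_2^2.
$$
Since $t\in\mathcal{E}$ and $n-m\leq n\leq N$, the definition of $\mathcal{E}$ gives $\|f_{n-m}\|_2^2\leq K$, so $I_0\leq C_0 K L^{-n}$ with $C_0 = c_0^{-2}$.

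The only real subtlety is the lower bound for $|\widehat{\chi}|$ on $[0,1]$: the explicit trapezoidal density whose Fourier transform is a product of two sinc factors can vanish at the right endpoint, and one has to either perturb $\chi$ or shrink the cutoff very slightly. This is a soft issue that disappears into the constant $C_0$. Everything else is a routine combination of the scaling identity $P_1 \sim \widehat{f_{n-m}}$ and the standing hypothesis on $t\in\mathcal{E}$.
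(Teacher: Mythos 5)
Your scaling identity $P_1(\xi)=\widehat{f_{n-m}}(L^n\xi)/\widehat{\chi}(L^m\xi)$, followed by Plancherel and the definition of $\mathcal{E}$, is the right structural idea and correctly reduces the lemma to a kernel comparison. But the point you flag at the end as a ``soft issue'' is in fact a genuine gap, and neither patch you suggest closes it. Since $\chi$ is the pushforward of Lebesgue measure on $[0,1]^2$ under $\pi_t$, we have $\widehat{\chi}(\xi)=t^{-1}\widehat{\mathbf{1}_{[0,1]}}(\xi)\,\widehat{\mathbf{1}_{[0,t]}}(\xi)$, which has a first-order zero at $\xi=1$ --- precisely the right endpoint $L^m\xi=1$ of your domain. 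Near that endpoint $|\widehat{\chi}(L^m\xi)|^{-2}\sim|1-L^m\xi|^{-2}$ is not integrable, so the pointwise bound $|P_1|^2\le c_0^{-2}|\widehat{f_{n-m}}(L^n\cdot)|^2$ fails exactly where it matters. Shrinking the domain to $[0,(1-\delta)L^{-m}]$ only bounds the integral over that shorter interval; on the leftover piece $[(1-\delta)L^{-m},L^{-m}]$ the trivial bound $|P_1|\le 1$ gives $\delta L^{-m}$, which dwarfs $KL^{-n}$ for any fixed $\delta$. And you are not free to ``perturb $\chi$'': the whole reduction rests on $f_n$ being the genuine density of $\pi_t\mu_n$, which is what makes the hypothesis $\|f_n\|_2^2\le K$ available in the first place.

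The correct way to implement your idea --- and the reason Section~3.2 refers to this as the ``Poisson localization lemma'' --- is to avoid dividing by $\widehat{\chi}$ altogether. Majorize $\mathbf{1}_{[-L^{-m},L^{-m}]}$ by a nonnegative kernel $\kappa$ with $\widehat{\kappa}\ge 0$ and $\|\widehat{\kappa}\|_\infty\lesssim L^{-m}$ (the Poisson kernel at height $L^{-m}$, or the Fej\'er-type kernel $L^m\mathbf{1}_{[-L^{-m},L^{-m}]}*\mathbf{1}_{[-L^{-m},L^{-m}]}$). Expanding $|P_1|^2=L^{-2(n-m)}\sum_{\alpha,\alpha'}e^{2\pi i(\alpha-\alpha')\xi}$ as in (\ref{intro-e2a}) and integrating against $\kappa$ reduces $I_0$ to $L^{-2(n-m)}\sum_{\alpha,\alpha'}\widehat{\kappa}(\alpha-\alpha')$. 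The count of pairs with $|\alpha-\alpha'|\lesssim L^m$ is controlled by $L^{n-m}\|f_{n-m}\|_2^2\le L^{n-m}K$, using that the autocorrelation of $f_{n-m}$ is maximal at the origin, and this yields $I_0\lesssim L^{-m}\cdot L^{-2(n-m)}\cdot L^{n-m}K=KL^{-n}$. This is the same information you were trying to access through the quotient $\widehat{f_{n-m}}/\widehat{\chi}$, obtained without the dangerous division.
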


Assuming that $K<cL^m$ for some small enough constant, we now have 
$$\int_{L^{-m}}^1 |P_1|^2 \geq (2C)^{-1} L^{m-n}$$
That would still do us little good if most of this integral lived on the part of $[L^{-m},1]$ where $P_2$ is very small. We must therefore prove that this is not the case.

Currently, there are two ways of doing this: the SSV method used in \cite{NPV}, \cite{LZ},  \cite{BV1}, \cite{BV3}, \cite{BLV}, and the SLV method of \cite{BLV}. We present them in Sections \ref{sec-ssv} and \ref{sec-slv}, respectively. 
In the proof of Theorem \ref{thm-upper} (ii) in \cite{BLV}, the two methods are combined together so that the SSV method is applied to the factors $A^{(i)}, B^{(i)}$ with $i=1,3$ (we will call them {\it SSV factors}), and the SLV method handles the factors $A^{(2)},B^{(2)}$ ({\it SLV factors}). 
This is done by proving an SLV bound first and then subtracting the SSV intervals from the SLV set.
In order to keep the exposition as simple as possible, I will present each of the two methods separately, assuming that only one type of factors is present at a time.


\subsection{SSV estimates}\label{sec-ssv}

The results in \cite{NPV}, \cite{BV1}, \cite{BV3}, \cite{LZ}, and parts of \cite{BLV}, rely on estimates on the size of the {\it Set of Small Values} (SSV) of $P_2$, which we now define. For a function $\psi:\nn\to(0,\infty)$, with $\psi(m)\searrow 0$ as $m\to\infty$, we write
\begin{equation}\label{ssvpropdef}
SSV_t =\{\xi\in[0,1]:\ | P_2(\xi)|  \leq\psi(m)\}
\end{equation}
Suppose that we can prove that for some $t\in \mathcal{E}$,
\begin{equation}\label{ssv-e2}
\int_{[L^{-m},1]\cap SSV_t} |P_1(\xi)|^2 d\xi \leq \frac{C^{-1}}{2}L^{m-n}.
\end{equation}
Then by (\ref{intro-e2}), the integral on $[0,L^{-m}]\cup ([L^{-m},1]\setminus SSV_t)$ is at least $\frac{C^{-1}}{2}L^{m-n}$.  Assume also that this dominates (\ref{poisson}) as explained in the last section. 
Since on the complement of $SSV_t$ we have $|P_2|\geq \psi(m)$, we get that
\begin{equation}\label{ssv-e3}
\int_{[L^{-m},1]\setminus SSV_t} |P(\xi)|^2 d\xi \geq cL^{m-n}\psi(m)^2,
\end{equation}
which of course also provides a bound from below on $\int_{L^{-m}}^1 |P(\xi)|^2d\xi$.

In order for us to be able to apply Proposition \ref{prop-reduction}, this bound must be at least as good as (\ref{intro-e1}). This depends on the choices of $\psi$ and $K$. Our method of proving estimates of the form (\ref{ssv-e2}) will depend on the {\it SSV property} defined below, and $\psi$ must be chosen so as for $\phi_t$ to have this property. This puts limits on how large the parameter $K$ is allowed to be, and at the end of the day, determines the type of Favard length bounds that we are able to get.

\begin{definition}\label{SSV prop}
We say that $\phi_t$ has the SSV property with SSV function $\psi$ if there exist $c_2,c_3>0$ with $c_3/ c_2$ sufficiently large (to be determined later) such that $SSV_t$ is contained in $L^{c_2m}$ intervals of size $L^{-c_3m}$. 
\end{definition}

The function $\psi$ depends on $A$ and $B$. If $A^{(3)}\equiv B^{(3)}\equiv 1$, we will have $\psi(m)=L^{-c_1m}$; otherwise, we set $\psi(m)=L^{-c_1m\log m}$. This matches our choices of 
$K:=N^{\epsilon_0}$ and $K:=N^{\epsilon_0/\log\log N}$, respectively. 

For periodic trigonometric polynomials such as $\phi_A$, the SSV property can be thought of as a condition on the separation of the roots of $P_1$. Generically, if $P_1$ has at most $L^{c_2m}$ roots, all of multiplicity bounded uniformly in $m$ and roughly equally spaced, the SSV property holds with $c_3$ arbitrarily large provided that $c_1$ was chosen to match it. (The values of $P_1$ become smaller as we zoom in closer to the roots, but the number of intervals needed for this stays constant.) On the other hand, high multiplicity roots of $P_1$ can lead to SSV violations.

The argument we present here was first used in \cite{NPV} for the 4-corner set.
Its subsequent applications in \cite{LZ}, \cite{BV1}, \cite{BLV} relied on much more general number-theoretic input, in the sense that the SSV property was extended to wider classes of sets, but the calculation in Proposition \ref{ssv goal} continued to be used with relatively few substantive changes.

Our proof of Proposition \ref{ssv goal} relies strongly on the fact that $E_n$ is a product set, which will allow us to separate variables in the double integrals below. 
For non-product self-similar sets (Examples 2 and 4) with $L=3$ and 4, there is a ``pseudofactorization" substitute for this, where the trigonometric polynomials in question do not actually factor into functions of a single variable, but can nonetheless be estimated from below by combinations of such functions. This was done in \cite{BV1} for $L=3$ and in \cite{BLV} for $L=4$. 

For non-product sets with $L\geq 5$, the SSV property is still used, but the argument below must be replaced by an entirely different one, based on a variant of the Poisson localization lemma (Lemma \ref{poisson}). Moreover, we can only get a weaker SSV property with $\psi(m)=L^{-c_1m^2}$ and must choose 
$K=\exp(\epsilon_0 \sqrt{\log N})$ to match that. This was done in \cite{BV3}, and yields the result in Theorem \ref{thm-upper}(iii).

We now prove the result we need for product sets. 
Let 
$$SSV_A:=\{\xi\in[0,1]:|P_{2,A}(\xi)|\leq \psi(m)\},$$
$$SSV_B(t):=\{\xi\in[0,1]:|P_{2,B}(t\xi)|\leq\psi(m)\}$$
where $P_{2,A}=\prod_{j=1}^m \phi_A(L^j \xi)$ and similarly for $B$. Then
$SSV_t\subseteq SSV_A\cup SSV_{B}(t)$. The SSV properties for $\phi_A$ and $\phi_B$ are defined in the obvious way.

\begin{proposition}\label{ssv goal}
Suppose that $\epsilon_0$ is small enough, and that (\ref{size-e}) holds.
Assume also that $\phi_A,\phi_B$ have the SSV property.
Then
$$I:=\frac1{|\mathcal{E}|}\int_0^1\int_{[L^{-m},1]\cap SSV_t}|P_{1,t}(\xi)|^2 d\xi dt\leq CL^{m-n}$$
Since $|\mathcal{E}|\leq 1$, this in particular implies (\ref{ssv-e2}).
\end{proposition}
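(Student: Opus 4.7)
The plan is to split the SSV set via $SSV_t\subseteq SSV_A\cup SSV_B(t)$, then handle each piece using Fubini and the product structure $|P_{1,t}(\xi)|^2 = |P_{1,A}(\xi)|^2|P_{1,B}(t\xi)|^2$ to separate the $\xi$ and $t\xi$ variables, reducing matters to estimates on $\int_{SSV_A}|P_{1,A}|^2$ and $\int_{SSV_B}|P_{1,B}|^2$ that the SSV property of $\phi_A,\phi_B$ will supply.

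Write $I\leq I_A+I_B$ accordingly. For $I_A$ the set $SSV_A$ is independent of $t$, so Fubini lets me perform the $t$-integral first. The substitution $\eta=t\xi$ gives $\int_0^1|P_{1,B}(t\xi)|^2\,dt = \tfrac{1}{\xi}\int_0^\xi|P_{1,B}(\eta)|^2\,d\eta$, and since $P_{1,B}$ is periodic with period $L^{-(m+1)}$ with $\int_0^{L^{-(m+1)}}|P_{1,B}|^2 = L^{-(m+1)}|B|^{-(n-m)}$ by Plancherel (the frequencies of $P_{1,B}$ are distinct multiples of $L^{m+1}$), this quantity is $\lesssim |B|^{-(n-m)}$ uniformly for $\xi\geq L^{-m}$. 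Consequently $I_A\lesssim |\mathcal{E}|^{-1}|B|^{-(n-m)}\int_{SSV_A\cap[L^{-m},1]}|P_{1,A}(\xi)|^2\,d\xi$.

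For $I_B$ the symmetric trick is the change of variables $(\xi,t)\mapsto(\xi,\eta=t\xi)$ with Jacobian $1/\xi$, which converts the $t$-dependent constraint $\xi\in SSV_B(t)$ into the $\eta$-constraint $\eta\in SSV_B$. Integrating in $\xi$ first on $[\max(\eta,L^{-m}),1]$ via a dyadic decomposition plus Plancherel yields $\int_{L^{-m}}^1|P_{1,A}(\xi)|^2\,d\xi/\xi \lesssim m\,|A|^{-(n-m)}$, so $I_B\lesssim |\mathcal{E}|^{-1}m\,|A|^{-(n-m)}\int_{SSV_B\cap[0,1]}|P_{1,B}(\eta)|^2\,d\eta$.

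The remaining ingredient is the bound $\int_{SSV_A}|P_{1,A}|^2\lesssim L^{(c_2-c_3)m}|A|^{-(n-m)}$ (and likewise for $B$), obtained from the SSV property's decomposition of $SSV_A$ into $L^{c_2m}$ intervals of length $L^{-c_3m}$ together with a Plancherel-type estimate on each such interval. Assembling everything and using $|\mathcal{E}|\geq K^{-1/2}$, $m\gtrsim\log N$, and $K\lesssim N^{\epsilon_0}$ gives $I\lesssim L^{m-n}\,L^{(c_2-c_3)m}\,K^{1/2}$; choosing $c_3/c_2$ sufficiently large (as the SSV property allows) then makes $L^{(c_3-c_2)m}\geq K^{1/2}$ and yields $I\leq CL^{m-n}$. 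The main obstacle lies precisely in justifying this last estimate on $\int_{SSV_A}|P_{1,A}|^2$: when $c_3>1$ the SSV intervals are shorter than the period $L^{-(m+1)}$ of $P_{1,A}$, Plancherel on a single small interval is not directly available, and one has to work harder --- possibly via a Bernstein-type bound, Cauchy--Schwarz combined with an $L^4$ moment estimate, or by exploiting the arithmetic structure of the roots of $A(x)$ underlying the SSV property to control the distribution of the peaks of $|P_{1,A}|^2$ relative to $SSV_A$. This trade-off is what dictates the admissible relations between $\epsilon_0$, $c_0$, and the constants $c_1,c_2,c_3$ built into the SSV property.
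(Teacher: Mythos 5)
Your strategy for the double integral---split $SSV_t\subseteq SSV_A\cup SSV_B(t)$, use Fubini and the product structure $P_{1,t}(\xi)=P_{1,A}(\xi)P_{1,B}(t\xi)$, change variables $\eta=t\xi$, and invoke periodicity/Plancherel on the $\eta$-integral---is essentially the paper's (the paper passes to $u=t\xi$ and integrates $|B_m^n(e^{2\pi iu})|^2$ over $[0,1]$; you keep a slightly sharper local average $\tfrac1\xi\int_0^\xi$, which is fine). The Plancherel input you use (your $\int_0^{L^{-(m+1)}}|P_{1,B}|^2=L^{-(m+1)}|B|^{-(n-m)}$) is exactly the paper's Lemma~\ref{exp dichot}.

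However, you have correctly identified the genuine gap, and the obstacle you name is precisely where the paper does something you did not: the estimate for $\int_{SSV_A\cap[L^{-m},1]}|P_{1,A}|^2$. Your target bound $\lesssim L^{(c_2-c_3)m}|A|^{-(n-m)}$ (i.e.\ $\int_{J_i}|P_{1,A}|^2\lesssim |J_i|\cdot|A|^{-(n-m)}$) asserts that $|P_{1,A}|^2$ has its global average on each SSV interval $J_i$, but there is no reason for that: $|J_i|=L^{-c_3m}$ is much shorter than the period $L^{-(m+1)}$ of $P_{1,A}$, and $J_i$ may sit near a peak. The remedies you float (Bernstein, $L^4$, or arithmetic control of the peaks) are not what the paper does. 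Instead, with $\ell=c_3m$, the paper splits $A_m^n=A_m^\ell\cdot A_\ell^n$, bounds the low-frequency block trivially in $L^\infty$ by $\|A_m^\ell\|_\infty\leq|A|^{\ell-m}$, and \emph{then} applies Lemma~\ref{exp dichot} to $A_\ell^n$ over $J_i$---which is now legitimate because $|J_i|=L^{-\ell}$ matches the coarsest frequency scale $L^{\ell+1}$ of $A_\ell^n$. This yields only $\int_{J_i}|P_{1,A}|^2\leq|A|^{-(n-\ell)}L^{-\ell}$, worse than your target by a factor $|A|^{(c_3-1)m}$, but since $|A||B|=L$ this loss converts into a $|B|^{-(c_3-1)m}$ gain, and one closes the argument by requiring $|B|^{c_3-1}\geq L^{c_2+1}$ (which the freedom in the SSV property permits). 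So: same skeleton, but you are missing the frequency-splitting trick, which is the one new idea needed to make the $\int_{J_i}|P_{1,A}|^2$ estimate rigorous, and the resulting admissible bound is weaker than the one you posited---it just happens to be enough.
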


\begin{proof}
It suffices to estimate
$$I_A:= \frac1{|\mathcal{E}|}\int_0^1\int_{[L^{-m},1]\cap SSV_A}|P_{1,A}(\xi) P_{1,B}(t\xi)|^2 d\xi dt$$
the proof for the integral on $SSV_B(t)$ being similar.

We will need to split $P_1$ further into frequency ranges, and we set up the notation for this:
$$A_{m_1}^{m_2}(x)=\prod_{k=m_1+1}^{m_2}A(x^{L^k}),$$
and similarly for $B$. (Note that this is not normalized, so that $P_{1,A}(\xi)=|A|^{m-n}A_m^n(e^{2\pi i\xi})$.) The reason for this is that high-frequency factors $A_\ell^n$, with $\ell>m$ sufficiently large depending on the constants in the SSV estimates, are well behaved on the SSV intervals of $\phi_A$. 

The following lemma is very simple, but we single it out because it will ultimately provide the gain we seek.

\begin{lemma}\label{exp dichot}
We have
$$\int_{\xi_0}^{\xi_0+L^{-m_1}} |A_{m_1}^{m_2}(e^{2\pi i\xi})|^2\, d\xi =  |A|^{m_2-m_1}L^{-m_1}$$

\end{lemma}

\begin{proof}

Expanding $|A_{m_1}^{m_2}|^2$, we get
$$\int_{\xi_0}^{\xi_0+L^{-m_1}} |A_{m_1}^{m_2}(e^{2\pi i\xi})|^2 d\xi
=\int_{\xi_0}^{\xi_0+L^{-m_1}}
\sum_{j_1,j_2=1}^{|A|^{m_2-m_1}}e^{2\pi iL^{m_1}(\lambda_{j_1}-\lambda_{j_2})\xi}\, d\xi
$$ 
where $\lambda_j\in \nn$ are distinct. All terms with $j_1\neq j_2$ integrate to 0 by periodicity, and each diagonal term contributes  $L^{-m_1}$.

\end{proof}

We now return to the proof of Proposition \ref{ssv goal}. We may assume that $c_2,c_3\geq 2$. Let also $\ell =c_3m$, and let $J_i$, $i=1,\dots,M$, be the SSV intervals for $\phi_A$ that intersect $[L^{-m},1]$. Then $|J_i|=L^{-\ell}$ and $M\leq L^{c_2m}$. 

We begin by changing variables $(\xi,t)\to (\xi,u)$, where $u=\xi t$, $dt=du/\xi$. 
Then
$$
I_A\leq\frac1{|\mathcal{E}|}L^{-2(n-m)}\sum_{j=1}^{M}\int_{J_i}
|A_m^n(e^{2\pi i\xi})|^2     \      \frac{d\xi}{\xi}\int_0^1 |B_m^n(e^{2\pi iu})|^2 du$$
By Lemma \ref{exp dichot},
\begin{align*}
I_A&\leq\frac{|B|^{n-m}}{|\mathcal{E}|}L^{-2(n-m)}
\sum_{j=1}^{M}\int_{J_i}   |A_m^n(e^{2\pi i\xi})|^2\frac{d\xi}{\xi}\\
&\leq\frac{2 |B|^{n-m}}{|\mathcal{E}|}L^{-2(n-m)}L^m
\sum_{j=1}^{M}\int_{J_i}   |A_m^n(e^{2\pi i\xi})|^2\, d\xi
\end{align*}
where we also used that $|\xi|\geq L^{-m}/2$ on $J_i$. We now separate frequencies in order to apply Lemma \ref{exp dichot} to integration on $J_i$: 
\begin{align*}
\int_{J_i}   |A_m^n(e^{2\pi i\xi})|^2\, d\xi
&=\int_{J_i} |A_m^\ell(e^{2\pi i\xi})A_\ell^n(e^{2\pi i\xi})|^2 d\xi\\
&\leq  |A|^{2(\ell -m)}  \int_{J_i}   |A_\ell^n(e^{2\pi i\xi})|^2 d\xi\\
&\leq |A|^{2(\ell -m)+(n-\ell)}L^{-\ell}  
\end{align*}
Hence
\begin{align*}
I_A&\leq  \frac{2 |B|^{n-m}}{|\mathcal{E}|}L^{-2(n-m)}L^{m-\ell}M\,|A|^{n+\ell -2m}
=\frac{2}{|\mathcal{E}|}\frac{L^{(c_2+1)m}}{|B|^{(c_3-1)m}}L^{-n}. \\
\end{align*}
If $c_3$ is chosen large enough so that $|B|^{c_3-1}\geq L^{c_2+1}$, and if (\ref{size-e}) holds, the last expression is at most 
$$2|\mathcal{E}|^{-1}L^{-n}\leq 4 K^{1/2}L^{-n}< cL^{m-n}$$
as required.


\end{proof}


\subsection{Salem's argument on difference sets}
\label{sec-slv}

The SSV argument in the last section is quite general and suffices to prove a power bound for many self-similar sets including the 4-corner set and the Sierpi\'nski gasket in Examples 1 and 2. Unfortunately, it is not strong enough quantitatively to yield a power bound for all self-similar sets, or even for all rational product sets of the type discussed here. In this generality, we only have an SSV estimate with $\psi(m)=L^{-cm^2}$, leading to the Favard length bound in Theorem \ref{thm-upper} (iii) \cite{BV3}. It was observed in \cite{BLV} that this is in fact the best SSV estimate that we can have in the general case, basically because $P_1$ may have very high multiplicity zeroes and therefore take very small values in relatively large neighbourhoods of such zeroes. (The number-theoretic reasons for such behaviour will be explained in more detail in Section \ref{sec-badcyclo}.)

The SLV approach, introduced in \cite{BLV}, circumvents this difficulty as follows.  
Although $P_2$ may be unacceptably small on intervals too long to be negligible for our purposes, it is still reasonably large on most of $[0,1]$. This raises the prospect of reworking the calculation in (\ref{intro-e2a}) so that the integration only takes place on the set where $P_2$ is not small and no SSV intervals need to be subtracted afterwards. 

This is indeed what we do in \cite{BLV}, with one major caveat. The argument in (\ref{intro-e2a}) relies on the availability of a function $h(\xi)$ supported on $[-1,1]$ whose Fourier transform is non-negative. In general, if we replace $[-1,1]$ by a subset $G$ of it, the existence of such a function can no longer be taken for granted. However, if $G$ contains a {\it difference set} 
$$\Gamma -\Gamma:=\{x-y:\ x,y\in\Gamma\}$$
for an appropriate $\Gamma$ (in our application, a finite union of intervals), then such a function can in fact be constructed, and we will do so below.

The challenge is to get a good enough bound from below on the resulting integral. Specifically, we  must have
\begin{equation}\label{good est}
\int_{G}|P_1|^2\geq CKL^{-n}
\end{equation}
with $C>C_0$, where $C_0$ is the constant in Lemma \ref{poisson}. This will allow us to remove the interval $[0,L^{-m}]$ from $G$ and still get the lower bound in (\ref{intro-e1}). Given that (\ref{poisson}) is essentially optimal, the requirement (\ref{good est}) cannot be relaxed.

This leads to competing demands on $G$, therefore on $\Gamma$: on one hand, the difference set $\Gamma-\Gamma$ has to avoid the high multiplicity zeroes of $P_1$, and on the other hand, it also must be large enough for (\ref{good est}) to hold. A major challenge in this approach is to make sure that the two conditions can be satisfied simultaneously.

Our use of difference sets in this context was inspired by similar calculations on Bohr sets associated with exponential sums in additive combinatorics  (see e.g. \cite{Bourg1999}). Unfortunately, the usual additive-combinatorial lower bounds on the size of such sets are not sufficient for our purposes, as they fail to ensure (\ref{good est}). Instead, our set $\Gamma$ will be tailored to the problem, based on specific number-theoretic information regarding the cyclotomic divisors of $A(x)$ and $B(x)$. We defer a discussion of this issue until the next section, focusing for now on obtaining the lower bound in (\ref{good est}) if $\Gamma$ is given.

\begin{definition}\label{slv structured}
We say that $\phi_t$ is SLV-structured if there is a Borel set $\Gamma\subset[0,1]$ (the SLV set) and constants $C_1,C_2$ such that:
\begin{equation}\label{e-add1a}
\prod_{k=1}^m |\phi_t(L^k\xi)|\geq L^{-C_1 m}\hbox{ on }\Gamma-\Gamma,
\end{equation}
\begin{equation}\label{e-add2a}
|\Gamma|\geq C_2KL^{-m}.
\end{equation}
\end{definition}

\begin{proposition}\label{gamma want}
Suppose that $\phi_{t_0}(\xi)$ is SLV-structured. Then \eqref{intro-e1} holds for $t=t_0$.
\end{proposition}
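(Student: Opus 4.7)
The plan is to adapt the Salem-type estimate (\ref{intro-e2a}), replacing the fixed tent $\one_{[-1/2,1/2]}*\one_{[-1/2,1/2]}$ by $h:=\one_\Gamma * \one_{-\Gamma}$. By construction $h$ is nonnegative, supported in $\Gamma-\Gamma$, with $\|h\|_\infty\le |\Gamma|$, $\int h = |\Gamma|^2$, and Fourier transform $\widehat h = |\widehat{\one_\Gamma}|^2\ge 0$: exactly the positivity that drove Salem's diagonal lower bound. Writing $P_1(\xi)=L^{-(n-m)}\sum_{\alpha\in\mathcal{A}}e^{2\pi i\alpha\xi}$ with $|\mathcal{A}|=L^{n-m}$ and retaining only the diagonal $\alpha=\alpha'$ terms gives
$$\int |P_1|^2 h\,d\xi \;\ge\; L^{-(n-m)}\widehat h(0) \;=\; L^{m-n}|\Gamma|^2.$$
Since $h\le |\Gamma|\,\one_{\Gamma-\Gamma}$, and $|P_1|^2$ is even (so integration over the symmetric set $\Gamma-\Gamma\subset[-1,1]$ doubles the integral over $(\Gamma-\Gamma)\cap[0,1]$), combined with (\ref{e-add2a}) this yields
$$\int_{(\Gamma-\Gamma)\cap[0,1]}|P_1|^2\,d\xi \;\ge\; \tfrac12 |\Gamma|\,L^{m-n} \;\ge\; \tfrac{C_2}{2}\,KL^{-n}.$$

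Next I subtract the Poisson peak at $0$ \emph{while still working with $P_1$ alone}. Lemma \ref{poisson-lemma} gives $\int_0^{L^{-m}}|P_1|^2\le C_0 KL^{-n}$ with $C_0$ independent of $\Gamma$, so choosing $C_2>2C_0$ in Definition \ref{slv structured} (a normalization we may freely impose, as foreshadowed by the discussion around (\ref{good est})),
$$\int_{(\Gamma-\Gamma)\cap[L^{-m},1]}|P_1|^2\,d\xi \;\ge\; \big(\tfrac{C_2}{2}-C_0\big)KL^{-n} \;\ge\; c\,KL^{-n}.$$

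Only now do I invoke the SLV lower bound (\ref{e-add1a}): on $\Gamma-\Gamma$ we have $|P_2(\xi)|^2\ge L^{-2C_1 m}$, hence
$$\int_{L^{-m}}^1|P|^2\,d\xi \;\ge\; L^{-2C_1 m}\int_{(\Gamma-\Gamma)\cap[L^{-m},1]}|P_1|^2\,d\xi \;\ge\; c\,KL^{-n}\,L^{-2C_1 m}.$$
Since $m$ is of order $\log N$ (respectively $\log N/\log\log N$), the residual factor $L^{-2C_1 m}$ is of the form $N^{-\alpha\epsilon_0}$ for a constant $\alpha$ determined by $C_1$, $\log L$, and the chosen ratio between $m$ and $\log K$. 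This is exactly the bound (\ref{intro-e1}).

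\textbf{Main obstacle.} The one delicate point is the ordering of the last two steps. Applying Salem's argument to $|P|^2$ directly, or multiplying by $|P_2|^2$ \emph{before} subtracting the peak, is fatal: Lemma \ref{poisson-lemma} controls the $[0,L^{-m}]$-contribution through $|P_1|$ alone (since $|P_2|\le 1$ uniformly gives no improvement there), so the peak mass is a constant multiple of $KL^{-n}$. That constant loss would dwarf any estimate already shrunk by the polynomial-in-$N$ factor $L^{-2C_1 m}$. The SLV-structure definition is calibrated so that the constant $C_2$ in (\ref{e-add2a}) exceeds the absolute constant $C_0$ from the Poisson lemma; this constant-factor gap is the only opening through which the peak can be absorbed, and only after that absorption can we afford to pay the $N^{-\alpha\epsilon_0}$ cost that the right-hand side of (\ref{intro-e1}) is built to tolerate.
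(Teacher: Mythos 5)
Your proof is correct and follows essentially the same route as the paper: a Salem-type positivity argument with $h=\one_\Gamma*\one_{-\Gamma}$ to get the diagonal lower bound $\int_{\Gamma-\Gamma}|P_1|^2\gtrsim |\Gamma|L^{m-n}$, subtraction of the Poisson peak from $P_1$ alone, and only then multiplication by the uniform lower bound on $|P_2|^2$ from \eqref{e-add1a}. The only cosmetic differences are that you use the unnormalized $h$ (the paper uses $|\Gamma|^{-1}\one_\Gamma*\one_{-\Gamma}$) and you make the needed normalization $C_2>2C_0$ explicit, which the paper leaves implicit in the surrounding discussion of \eqref{good est}.
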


\begin{proof}
Similarly to (\ref{intro-e2a}), we write $P_1(\xi)=\sum_{\alpha\in\mathcal{A}}
e^{2\pi i\alpha\xi}$ (note that $|\mathcal{A}|=L^{n-m}$). Observe that $|P_i(\xi)|=|P_i(-\xi)|$, $i=1,2$, so that the integrands below are symmetric about the origin.

Let $h=|\Gamma|^{-1}\one_\Gamma*\one_{-\Gamma}$, then $0\leq h \leq 1$ and 
$\widehat{h}=|\Gamma|^{-1}|\widehat{\one_\Gamma}|^2\geq 0$. Hence
\begin{align*}
\int_{\Gamma-\Gamma}|P_1(\xi)|^2&\geq \int_{\Gamma-\Gamma}|P_1(\xi)|^2h(\xi)d\xi\\
&\geq C L^{-2(n-m)}\sum_{\alpha,\alpha'}\int_{\Gamma-\Gamma}h(\xi)e^{2\pi i(\alpha-\alpha')\xi}d\xi\\
&\geq C L^{-2(n-m)}\Big(\sum_{\alpha}\int_{\Gamma-\Gamma}h(\xi)d\xi
+\sum_{\alpha\neq \alpha'}\widehat{h}(\alpha-\alpha')\Big)\\
&\geq C L^{-2(n-m)}L^{n-m}|\Gamma|=CL^{m-n}|\Gamma|\\
& \geq C C_2KL^{-n},
\end{align*}
where at the last step we used (\ref{e-add2a}). 
Comparing this with (\ref{poisson}), we get
$$
\int_{(\Gamma-\Gamma)\setminus[-L^{-m},L^{-m}]}|P_1(\xi)|^2d\xi \geq C_0KL^{-n},
$$
hence using also (\ref{e-add1a}),
$$
\int_{L^{-m}}^1 |P_1(\xi)|^2\,|P_2(\xi)|^2\,d\xi
\gtrsim C_0KL^{-n}L^{-2C_1m} 
\gtrsim KL^{-n}N^{-\alpha\epsilon_0}
$$
for some $\alpha>0$. The last inequality holds by the choice of $m$.

\end{proof}


\section{The number-theoretic part}
\label{sec-nt}

We now show how the SSV and SLV properties required to prove Theorem \ref{thm-upper} (ii) for rational product sets follow from number-theoretic properties of $A(x)$ and $B(x)$. Throughout this section we will be referring to the factorization in Definition \ref{A1234}. For simplicity, we will focus on product sets where only one type of SSV or SLV factors is present at a time: SSV cyclotomic factors in Section \ref{sec-goodcyclo}, SSV non-cyclotomic factors in Section \ref{sec-noncyclo}, and SLV factors in Section \ref{sec-badcyclo}. This provides a good introduction to the main ideas while minimizing the technicalities. Generalizing the construction in Section \ref{sec-badcyclo} depends on a more detailed analysis of the possible SLV cyclotomic divisors of $A(x)$. We discuss this in Section \ref{sec-roots}.

\subsection{Telescoping products}\label{sec-goodcyclo}

We begin with the case when $A(x)$ and $B(x)$ only have SSV factors of the first type.

\begin{proposition}\label{goodcyc}
Suppose that the only roots of $A(x),B(x)$ on the unit circle are roots of cyclotomic polynomials $\Phi_s(x)$ with $(s,L)\neq 1$. Then $\phi_t(\xi)$ has the SSV property with constants uniform in $t$.
\end{proposition}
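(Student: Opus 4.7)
The plan is to extract a ``telescoping'' collapse of the map $\mathcal{L}(\xi) = L\xi \pmod 1$ on the zero sets of $\phi_A$ and $\phi_B$, adapting the strategy from \cite{NPV} for the $4$-corner set. \emph{Step 1 (orbit structure).} A root of $\phi_A$ is a rational $p/s$ in lowest terms with $\Phi_s \mid A(x)$, and by hypothesis $\gcd(s,L)>1$; a direct computation shows $\mathcal{L}(p/s)$ has denominator $s/\gcd(s,L) < s$. Iterating, after at most $k_0 = O(1)$ steps the denominator becomes coprime to $L$ and, by hypothesis, is absent from $S_A^{(1)}\cup S_B^{(1)}$, so neither $\phi_A$ nor $\phi_B$ vanishes at the landing point. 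Hence there is a finite forward-invariant set $R^* \supset R_A \cup R_B$ (where $R_A, R_B \subset [0,1)$ are the root sets of $\phi_A, \phi_B$) and a constant $c_0 > 0$ with $|\phi_A|, |\phi_B| \geq c_0$ on $R^* \setminus (R_A \cup R_B)$.

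\emph{Step 2 (Lojasiewicz bound and localization).} Because cyclotomic polynomials are squarefree and pairwise coprime, every zero of $\phi_A$ is simple, so $|\phi_A(\eta)| \geq c_* \dist(\eta, R_A)$ for a fixed $c_* > 0$, and similarly for $\phi_B$. The SSV condition $|P_2(\xi)| \leq L^{-c_1 m}$ therefore forces
\[
\prod_{j=1}^{m} \dist(L^j \xi, R_A) \cdot \dist(tL^j \xi, R_B) \lesssim L^{-c_1 m}.
\]
By Step 1, if $L^{j^*} \xi$ is within $\delta \leq \delta_0$ of some $r \in R_A$ for a small fixed $\delta_0$, then $L^{j^*+k}\xi$ lies within $L^k\delta$ of $\mathcal{L}^k(r) \in R^*$ for each $k \leq k_0$, and for $k > k_0$ the orbit has stabilized in $R^* \setminus R_A$, contributing factors of size $\geq c_0$ to the product. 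Hence the ``bad'' indices arrange themselves into visit blocks of length $\leq k_0$, and each visit pins $\xi$ to a neighborhood of size $\leq L^{-j^*} \cdot L^{-c_1 m / k_0}$ around some preimage in $\mathcal{L}^{-j^*}(R_A)$.

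\emph{Step 3 (counting and uniformity in $t$).} For each $j \leq m$ and $r \in R_A \cup R_B$, the preimage set $\mathcal{L}^{-j}(r) \cap [0,1)$ consists of $L^j$ evenly spaced points. Such a preimage has denominator divisible by $L^j$ in lowest terms, so for $j$ larger than a fixed constant it cannot itself lie in $R_A \cup R_B$; this ``no-recurrence'' property makes the neighborhoods produced by distinct $(j, r)$ essentially disjoint. Summing, $SSV_t$ is covered by at most $(|R_A|+|R_B|)\sum_{j=1}^{m} L^j \leq L^{c_2 m}$ intervals, each of diameter $\leq L^{-j - c_1 m / k_0} \leq L^{-c_3 m}$ with $c_3 = c_1 / k_0$ and $c_2$ close to $1$. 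Since $k_0$, $c_0$, $c_*$, and the sets $R_A, R_B$ depend only on $A, B, L$, all constants are uniform in $t$; choosing $c_1$ large relative to $k_0$ makes $c_3/c_2$ arbitrarily large.

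The main obstacle is the bookkeeping in Step 2 when a single $\xi$ produces several visit blocks. Since each block has bounded length $k_0$ and distances within a block grow geometrically from the entry point, one can distribute the $L^{-c_1 m}$ budget across blocks (each receiving a portion), and the sharpest localization per block still yields an interval of size $L^{-c_3 m}$. All quantitative inputs---the stabilization time $k_0$, the uniform lower bound $c_0$, and the Lojasiewicz constant $c_*$---trace back to Step 1, and ultimately to the $(s,L) \neq 1$ hypothesis, which is what ensures that forward orbits of roots cannot linger near $R_A \cup R_B$ indefinitely.
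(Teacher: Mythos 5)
Your strategy is genuinely different from the paper's. The paper proves this proposition algebraically: for the $4$-corner set it exploits the telescoping identity $A(x)B(x^2)=(1+x^3)(1+x^6)=\frac{x^{12}-1}{x^3-1}$, iterating to get $\prod_{j=1}^m A(x^{4^j})B(x^{2\cdot 4^j})=\frac{x^{3\cdot 4^{m+1}}-1}{x^{12}-1}$, so that the long product has a closed form with only simple, evenly-spaced zeros, and $|P_{2,A}|\geq (2\cdot 4^m)^{-1}|e^{2\pi i\,3\cdot 4^{m+1}\xi}-1|$ follows by dividing off $|B|$. In the general case the paper runs analogous telescoping per cyclotomic factor. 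Your route is dynamical, tracking the forward orbits of the roots under $\mathcal{L}:\xi\mapsto L\xi\bmod 1$. Step 1 is correct in spirit and isolates the same number-theoretic mechanism (denominators containing prime factors of $L$ shrink under $\mathcal{L}$, so orbits escape $R_A\cup R_B$ in $k_0=O(1)$ steps and never return since the terminal denominator is coprime to $L$), though the justification in Step 3 that ``the preimage has denominator divisible by $L^j$'' is not quite right because cancellation can occur; the escape property is the correct reason.

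There is, however, a genuine gap in Steps 2--3 and the final paragraph: you assert without proof that the SSV condition $\prod_{j=1}^m|\phi_A(L^j\xi)\phi_B(tL^j\xi)|\leq L^{-c_1 m}$ forces a \emph{single} visit of depth $\lesssim L^{-c_1 m/k_0}$. This does not follow from the block decomposition alone. If the budget is spread across $B$ blocks at depths $\delta_1,\dots,\delta_B$, the constraint is roughly $\prod_b\delta_b^{k_0}\lesssim L^{-(c_1-O(1))m}$, and with $B\asymp m$ blocks one can have every $\delta_b\asymp L^{-(c_1-O(1))/k_0}$, i.e., depths bounded below by a \emph{constant}, which pins $\xi$ only to intervals of size $L^{-j_b}\cdot O(1)\geq L^{-m}\cdot O(1)$, far larger than the required $L^{-c_3 m}$ with $c_3/c_2$ large. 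The missing ingredient is the escape-time bookkeeping you allude to but do not use: once $L^{j_b}\xi$ falls within $\delta_b$ of a root, the orbit is forced to track the forward orbit of that root (where $\phi_A,\phi_B\geq c_0$ after $\leq k_0$ steps) for roughly $\log_L(1/\delta_b)$ further indices before it can re-enter a root neighborhood. This gives the disjoint-block constraint $\sum_b\log(1/\delta_b)\lesssim m\log L$, and \emph{combined} with the product constraint it forces either a contradiction (for $c_1$ large) or a ``truncated'' block whose entry depth $\delta^*$ satisfies $L^{-j^*}\delta^*\leq L^{-(c_1/k_0-O(1))m}$, yielding the desired interval size. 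Your current write-up gestures at ``distributing the budget'' and ``geometric growth within a block,'' but does not carry out this two-constraint argument, and as written the covering claim is unsupported. In short: the dynamical route can likely be made to work and would be an interesting alternative to the paper's telescoping proof, but the crucial inequality $\sum_b\log_L(1/\delta_b)\lesssim m$ and its interplay with $\prod_b\delta_b^{k_0}\leq L^{-(c_1-O(1))m}$ must be made explicit, since without it the conclusion of Step 2 is false.
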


Proposition \ref{goodcyc} was first proved (in a somewhat camouflaged form) in \cite{NPV} for the special case of the 4-corner set. The subsequent papers \cite{LZ}, \cite{BLV} made it more explicit and extended it to all factors $A^{(1)}(x)$, $B^{(1)}(x)$ as in Definition \ref{A1234}. 
For simplicity of exposition, we only prove the proposition for the 4-corner set, then discuss the general case very briefly.

\begin{proof} 
We have $A=B=\{0,3\}$ and $L=4$. Then 
$$A(x)=1+x^3=(1+x)(1-x+x^2)=\Phi_2(x)\Phi_6(x)$$
and neither $2$ nor $6$ are relatively prime to 4, so that this is indeed a special case of Proposition \ref{goodcyc}.
We will take advantage of the identity
\begin{equation}\label{e-tele1}
A(x)B(x^2)=(1+x^3)(1+x^6)=1+x^3+x^6+x^9=\frac{x^{12}-1}{x^3-1}.
\end{equation}
Iterating (\ref{e-tele1}), we can express the long products appearing in $P_{2}$ in a closed form:
\begin{equation}\label{e-tele2}
\prod_{j=1}^{m} A(x^{4^j})B(x^{2\cdot 4^j})
=\prod_{j=1}^{m}\frac{x^{3\cdot 4^{j+1}}-1}{x^{3\cdot 4^j}-1}
=\frac{x^{3\cdot 4^{m+1}}-1}{x^{12}-1}
\end{equation}
In particular, the zeroes of 
$\prod_{j=1}^m A(e^{2\pi i4^j\xi })$ all have multiplicity 1 and are contained in the arithmetic progression $(3\cdot 4^m)^{-1}\zz$. 
This controls the sets of small values of $P_{2,A}$ and $P_{2,B}$ simultaneously:
$$
|P_{2,A}(\xi)|=2^{-m}\prod_{j=1}^{m} |A(x^{4^j})| 
= 2^{-m}\ \frac{ \prod_{j=1}^{m} |A(x^{4^j})B(x^{2\cdot 4^j})| }{\prod_{j=1}^{m} |B(x^{2\cdot 4^j})|}
$$
$$
\geq \frac{1}{2\cdot 4^m}\left| e^{2\pi i 3\cdot 4^{m+1}\xi}-1            \right|
$$
so that $\phi_A$ has the SSV property, with 
$$
\{\xi:\ |P_{2,A}(\xi)|\leq 4^{-m} \delta \}\subset \bigcup_{a\in\zz} \left( \frac{a-c\delta}{3\cdot 4^{m+1}}, 
\ \frac{a+c\delta}{3\cdot 4^{m+1}} \right)
$$
Of course, a similar argument applies to $\phi_B$.

\end{proof}

The identity (\ref{e-tele1}) has a natural geometric interpretation: the projection of the 4-corner set on a line with slope 2 (the exponent of $x$ in $B(x^2)$) is a line segment. 
This was used in \cite{LZ}, where an analogous identity in more general cases was deduced from the ``tiling" condition that $|proj_{\theta_0}(E_\infty)|>0$ for some direction $\theta_0$. The argument was extended to its present generality in \cite{BLV}. In this setting, there need not be a single identity such as (\ref{e-tele1}) that covers all zeroes of $P_{2,A}$ and $P_{2,B}$ simultaneously, but we can still get the SSV estimate from similar telescoping products for individual cyclotomic factors of $A$ and $B$.


\subsection{Non-cyclotomic roots}\label{sec-noncyclo}

Consider now the case when $A(x)$, $B(x)$ have roots on the unit circle that are not roots of unity. To see that this indeed may happen, let  $A=B=\{0,3,4,5,8\}$ and $L=25$. 
Then $A(x)=1+x^3+x^4+x^5+x^8$ has 4 roots on the unit circle, all of which are non-cyclotomic.
(Namely, the roots are $e^{2\pi i\omega_j}$ with $\omega_1\approx 0.316$, $\omega_2
\approx 0.457$, $\omega_3\approx 0.543$, $\omega_4\approx 0.684$).

The argument below is due to \cite{BLV}, and relies 
on a version of Baker's theorem in transcendental number theory.
Roughly speaking, the theorem states that if $e^{2\pi i\xi_0}$  is a root of $A(x)$ with $\xi_0\in\rr\setminus\qq$, then $\xi_0$ cannot be approximated too well by rational numbers. 
The precise statement we need is a corollary of Theorem 9.1 of \cite{wald}.

\begin{theorem}\label{baker}
If $\xi_0\in\rr$ is irrational and $z_0=e^{2\pi i\xi_0}$ is algebraic, then for any integers $a,q$ with $q> 0$ we have
\begin{equation}\label{pseudoliouville}
\Big|\xi_0-\frac{a}{q}\Big|\geq \frac{C_0}{q^{\alpha}},
\end{equation}
where $C_0>0,$ $\alpha>1$ are positive constants that may depend on $\xi_0$, but are independent of
$a,q$. 
\end{theorem}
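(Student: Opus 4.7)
The plan is to deduce the bound from Baker's theorem on linear forms in logarithms of algebraic numbers (Theorem 9.1 of \cite{wald}). After replacing $\xi_0$ by its fractional part and adjusting $a$ accordingly (which leaves the inequality invariant), I may assume $\xi_0 \in (0,1)$, so that $\log z_0 = 2\pi i \xi_0$ in the principal branch. Writing $2\pi i = 2\log(-1)$, this gives the key identity
$$2\pi i (q\xi_0 - a) = q \log z_0 - 2a \log(-1),$$
which expresses $q\xi_0 - a$ (up to the factor $2\pi i$) as an integer linear combination of the logarithms of the two algebraic numbers $z_0$ and $-1$.

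The first step is to check that the linear form $\Lambda := q\log z_0 - 2a\log(-1)$ is nonzero for every $q\geq 1$ and $a\in\zz$. Indeed, $\Lambda = 0$ would yield $z_0^q = (-1)^{2a} = 1$, so $z_0$ would be a root of unity and $\xi_0$ would be rational, contradicting the hypothesis. Since $z_0$ and $-1$ are fixed algebraic numbers, Baker's theorem then provides effective constants $C_1 = C_1(\xi_0) > 0$ and $\kappa = \kappa(\xi_0) > 0$ with
$$|\Lambda| \geq C_1 \max(|q|, |2a|)^{-\kappa}.$$

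To convert this into (\ref{pseudoliouville}), I would first dispose of the trivial regime $|\xi_0 - a/q| \geq 1$, in which the inequality holds for any $C_0 \leq 1$. In the remaining case $|\xi_0 - a/q| < 1$ we have $|a| \leq (|\xi_0| + 1)q$, whence $\max(|q|, |2a|) \leq C_2 q$. Dividing the Baker estimate by $2\pi q$ then yields
$$|\xi_0 - a/q| = \frac{|\Lambda|}{2\pi q} \geq \frac{C_0}{q^{\kappa + 1}},$$
which is (\ref{pseudoliouville}) with $\alpha = \kappa + 1 > 1$. The only real subtlety is choosing the branch of the logarithm so that the identity above is literally correct (handled by the normalization $\xi_0 \in (0,1)$) and checking that the hypotheses of Theorem 9.1 of \cite{wald} apply to the two-term form; the remainder of the argument is routine, since the statement is purely a transcendence estimate and no harmonic-analytic input is needed.
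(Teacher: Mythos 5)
The paper does not actually spell out a proof of this theorem; it only remarks that the statement is a corollary of Theorem 9.1 of \cite{wald} (Baker's theorem on linear forms in logarithms of algebraic numbers). Your proposal supplies precisely the derivation that the paper leaves implicit, and it is correct. The identity $2\pi i(q\xi_0 - a) = q\log z_0 - 2a\log(-1)$ correctly rewrites the quantity of interest as a linear form in logarithms of the two algebraic numbers $z_0$ and $-1$ with integer coefficients $q$ and $-2a$; the nonvanishing check (rationality of $\xi_0$ if $\Lambda=0$) is exactly the needed hypothesis; disposing of the regime $|\xi_0 - a/q|\geq 1$ to control $|a|$ by a multiple of $q$ is the standard normalization that converts the Baker bound $|\Lambda| \gtrsim \max(|q|,|2a|)^{-\kappa}$ into $|\xi_0 - a/q| \gtrsim q^{-(\kappa+1)}$; and $\alpha=\kappa+1>1$ as required. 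One small point: the exponent $\kappa$ in Baker's theorem depends only on the degrees and heights of the fixed algebraic numbers $z_0$ and $-1$, not on $a,q$, which is exactly what the statement requires, so writing $\kappa=\kappa(\xi_0)$ is harmless though slightly imprecise. Since the paper cites rather than proves this result, your argument is the same approach the paper intends, carried out in detail.
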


Based on this, we can prove the following.

\begin{proposition}\label{liouville}
Suppose that all roots of $A(x)$ on the unit circle are non-cyclotomic. Then $A(x)$ has the log-SSV property.
\end{proposition}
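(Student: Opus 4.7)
The plan is to locate the zeros of $P_{2,A}(\xi)=|A|^{-m}\prod_{j=1}^{m}A(e^{2\pi iL^j\xi})$ in $[0,1]$ and bound $|P_{2,A}|$ from below by a controlled power of the distance to the nearest such zero. I would write the non-cyclotomic unit-circle roots of $A$ as $e^{2\pi i\omega_1},\ldots,e^{2\pi i\omega_r}$ with $\omega_i\in[0,1)\setminus\qq$, so the zero set of $P_{2,A}$ in $[0,1)$ is
$$\mathcal{Z}=\bigl\{L^{-j}(\omega_i+k):\ 1\le j\le m,\ 1\le i\le r,\ 0\le k<L^j\bigr\},$$
of cardinality at most $2rL^m$. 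If two distinct triples $(j_1,i_1,k_1)$, $(j_2,i_2,k_2)$ with $j_1\le j_2$ produce the same point of $\mathcal{Z}$, one obtains $L^{j_2-j_1}\omega_{i_1}\equiv\omega_{i_2}\pmod 1$; irrationality of the $\omega_i$ forces at most one solution $j_2-j_1$ for each ordered pair $(i_1,i_2)$, so every $\xi_0\in\mathcal{Z}$ is a zero of $P_{2,A}$ of multiplicity $\mu(\xi_0)\le r$.

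Next, fix $\xi_0\in\mathcal{Z}$, let $J=J(\xi_0)=\{j\in[1,m]:L^j\xi_0\equiv\omega_i\pmod 1\text{ for some }i\}$, $|J|=\mu$, and set $d_j=-\log\min_i\|L^j\xi_0-\omega_i\|$ for $j\notin J$. A Taylor expansion around $\xi_0$, combined with the elementary bound $|A(e^{2\pi i\eta})|\gtrsim\prod_i\|\eta-\omega_i\|$, gives in a small neighbourhood of $\xi_0$
$$|P_{2,A}(\xi)|\ \gtrsim\ c_A\,|\xi-\xi_0|^{\mu}\,\prod_{j\notin J}e^{-d_j},$$
with $c_A>0$ depending only on $A$. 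The remaining task is to bound $\sum_{j\notin J}d_j$, and this is the Baker-theoretic heart of the proof. If $j_1<j_2$ are two indices with $\|L^{j_\ell}\xi_0-\omega_i\|\le e^{-t}$ for the same $\omega_i$, iterating the orbit forces $\|(L^{j_2-j_1}-1)\omega_i\|\le 2L^{j_2-j_1}e^{-t}$; combined with the Baker lower bound $\|(L^{j_2-j_1}-1)\omega_i\|\gtrsim L^{-\alpha(j_2-j_1)}$ from Theorem \ref{baker}, this forces $j_2-j_1\gtrsim t/\log L$. The analogous estimate for close approaches to distinct roots uses the linear-forms-in-logarithms version of Baker to lower bound $\|L^k\omega_{i_1}-\omega_{i_2}\|$. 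Consequently $\#\{j\le m:d_j\ge t\}=O(m/t)$ for $t\gtrsim 1$, and integrating in $t$ up to the a priori bound $d_j\le\alpha m\log L$ yields $\sum_{j\notin J}d_j\le C_1 m\log m$.

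Combining the two displays gives $|P_{2,A}(\xi)|\gtrsim L^{-C_2 m\log m}\,|\xi-\xi_0|^r$ in a neighbourhood of each $\xi_0\in\mathcal{Z}$, with $C_2$ depending on $A$. The condition $|P_{2,A}(\xi)|\le\psi(m)=L^{-c_1 m\log m}$ then forces $|\xi-\xi_0|\le L^{-c_3m}$ whenever $c_1$ is taken large enough compared to $C_2$ that $(c_1-C_2)\log m/r\ge c_3$ for $m$ large; outside a suitably small neighbourhood of every zero one checks that $|P_{2,A}(\xi)|$ already exceeds $\psi(m)$. Hence the set where $|P_{2,A}|\le\psi(m)$ is covered by $|\mathcal{Z}|\le L^{c_2m}$ intervals of length $L^{-c_3m}$, which is the log-SSV property. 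The main obstacle is the Baker step: the exponent $\alpha$ in Theorem \ref{baker} is ineffective and varies with each root, so the constant $C_2$ depends on all of the non-cyclotomic roots of $A$, and the logarithmic loss in $\psi(m)$ (compared with the polynomial $L^{-c_1m}$ of Proposition \ref{goodcyc}) is a structural consequence of Baker producing only polynomial lower bounds on rational approximation.
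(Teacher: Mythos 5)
Your proof follows the same essential route as the paper: the decisive step is identical, namely applying the Baker-type Diophantine bound of Theorem \ref{baker} to show that the zeros of the long product do not accumulate too fast, and the conclusion $\sum_j d_j = O(m\log m)$ that you obtain by integrating $\#\{j : d_j \ge t\} = O(m/t)$ is exactly the content of the ``counting argument'' that the paper delegates to \cite{BLV}. Your observation that any effective version of (\ref{pseudoliouville}) with polynomial $q$-dependence can only give the $m\log m$ (rather than $m$) loss also matches the paper's remark. The only organizational difference is that the paper first factors $A(x)$ into linear factors $\varphi(\xi)=e^{2\pi i\xi}-e^{2\pi i\xi_0}$ and treats each non-cyclotomic root $\xi_0$ separately; by doing so, the zero-separation estimate (\ref{e-irr1}) always compares two preimages of the \emph{same} root $\xi_0$, and Theorem \ref{baker} applies directly. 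You instead treat all roots $\omega_1,\ldots,\omega_r$ at once, which leads you to invoke a cross-root lower bound on $\|L^k\omega_{i_1}-\omega_{i_2}\|$ via a linear-forms-in-logarithms version of Baker. That extra tool is in fact avoidable even within your framework: in the bound $\#\{j: d_j\ge t\}\le\sum_i\#\{j: \|L^j\xi_0-\omega_i\|\le e^{-t}\}$, one can count root by root, and each inner set uses only the same-root estimate, giving $O(rm/t)$. The paper's factoring trick achieves the same simplification more structurally; either way the core argument is the one you gave.

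One step that deserves more care than your sketch admits is the claim that ``outside a suitably small neighbourhood of every zero one checks that $|P_{2,A}(\xi)|$ already exceeds $\psi(m)$.'' That global lower bound is essentially equivalent to the whole proposition — it is not a routine check — and the correct statement is that the bound $\sum_{j\notin J} d_j(\xi) \lesssim m\log m$ you established locally near zeros in fact holds for every $\xi$ once one interprets $J=J(\xi)$ as the (possibly empty) set of scales where $L^j\xi$ is within $L^{-c_3m}$ of some $\omega_i$, with the remaining $d_j$'s controlled by exactly the same Baker-based gap argument. Folding that into the estimate gives the covering by $L^{c_2 m}$ intervals directly without a separate ``away from zeros'' case. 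This is what \cite{BLV} does; the paper's survey leaves it implicit, and so does your sketch, so you are in good company, but it is the step where the actual work lives.
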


To prove Proposition \ref{liouville}, it suffices to consider separately each factor $\varphi(\xi)=e^{2\pi i\xi}-e^{2\pi i\xi_0}$, where $\xi_0\in[0,1]\setminus\qq$ and $A(e^{2\pi i\xi_0})=0$.
Note that for $j=0,1,\dots,m-1$, the zeroes of $\varphi(L^j\xi)$ are $L^{-j}(\xi_0+k)$, $k\in\zz$. We wish to prove that these zeroes do not accumulate too closely.

The main observation is the following.
Let $j'>j$, then the distance between any two roots of $\varphi(L^j\xi)$ and $\varphi(L^{j'}\xi)$ respectively is
\begin{equation}\label{e-irr1}
\left| \frac{\xi_0+k}{L^j}-\frac{\xi_0+k'}{L^{j'}}\right|
=\frac{L^{j'-j}-1}{L^{j'}}\left|\xi_0+\frac{a}{L^{j'-j}-1}\right|
\end{equation}
for some $a\in\zz$. By (\ref{pseudoliouville}),
$$
(\ref{e-irr1})\geq\frac{L^{j'-j}-1}{L^{j'}}\,\,\frac{C_0}{(L^{j'-j}-1)^\alpha}
\geq C_0L^{-j}L^{-(j'-j)\alpha}.
$$
A counting argument converts this into the log-SSV property; see \cite{BLV} for details.

It is this part of the argument that causes the loss of $\log\log n$ in the exponent in (\ref{e-blv}). If $A(x)$ and $B(x)$ have no roots on the unit circle that are not roots of unity, then this section may be omitted altogether, and our methods yield the stronger power bound on the Favard length of the set. 

Naturally, the question arises whether the current argument could be improved to yield the stronger SSV property and hence the Favard power bound. It is not difficult to check that this cannot be accomplished by using so-called effective versions of Theorem \ref{baker}. Any diophantine result of the form (\ref{pseudoliouville}), regardless of the values of $C_0$ and $\alpha$, still yields only the log-SSV property. On the other hand, Baker's theorem is very general, and it could be possible to develop a better argument based on information specific to the problem. For instance, our proof does not really invoke approximating $\xi_0$ by arbitrary rationals, but only by those in (\ref{e-irr1}).


\subsection{The construction of $\Gamma$}
\label{sec-badcyclo}

We now turn to SLV factors, starting with an instructive special case.
Let  $A=B=\{0,3,4,8,9\}$ and $L=25$. Then $A(x)=B(x)=1+x^3+x^4+x^8+x^9$, which is divisible by
$\Phi_{12}(x)=1-x^2+x^4$. We claim that the long products
$$\prod_{j=1}^m A(x^{25^j}),\ \ \prod_{j=1}^m B(x^{25^j})$$
have zeroes of very high multiplicity on the unit circle. 
Indeed, let $z_0$ be a root of $\Phi_{12}$, say $z_0=e^{\pi i/6}$. Since 12 is relatively prime to $25$, the numbers  $e^{25^j \pi i /6}$ for $j=1,2,\dots$ are again roots of $\Phi_{12}$, hence also roots of $A(x)$. It follows that $z_0$ is a root of $A(x^{25^j})$ for all $j=1,\dots,m$, hence a root of the long product with multiplicity $m$. Similarly, roots of $\Phi_{12}(x^{25^k})$ are roots of $A(x^{25^j})$ for $j=k,\dots,m$, hence they are also high multiplicity roots of the long product as long as $k$ is reasonably small compared to $m$. This yields a large number of high multiplicity zeroes that cannot be handled by SSV methods. (The same argument applies of course to $B$.)

In cases such as this, we need to construct $\Gamma$ as specified in Section \ref{sec-slv}. More precisely, we will construct sets $\Gamma\subset[0,1]$ and $\Delta\subset\rr$ such that
\begin{equation}\label{e-add1}
\Gamma-\Gamma\subset\Delta
\end{equation}
\begin{equation}\label{e-add2}
\prod_{j=0}^{m-1}|\phi_A(25^j\xi)\phi_B(25^j t\xi)|^2\geq 25^{-C_1m}\hbox{ for }\xi\in\Delta
\end{equation}
\begin{equation}\label{e-add3}
|\Gamma|\geq C_2\,25^{-(1-\epsilon)m}\hbox{ for some }\epsilon>0.
\end{equation}

The interested reader may check that $A(x)$ and $B(x)$ have no roots on the unit circle other than the roots of $\Phi_{12}(x)$, hence the construction will resolve the problem entirely in this particular case, yielding a power bound on the Favard length of $E_n$.

We first construct a set $\Delta_0$ disjoint from the set of small values of $\phi_{A}(\xi)=\frac{1}{5}A(e^{2\pi i \xi})$. 
Let $\Lambda=\{\frac{1}{12}, \frac{5}{12}, \frac{7}{12}, \frac{11}{12}\}+\zz$, so that $e^{2\pi i\lambda}$ for $\lambda\in\Lambda$ are exactly the zeroes of $\Phi_{12}$. We want $\Delta_0$ to avoid a neighbourhood of $\Lambda$. The key observation is that all points of $\frac{1}{6}\zz$ are at distance at least $1/12$ from $\Lambda$, hence we may take $\Delta_0$ to be a neighbourhood of $\frac{1}{6}\zz$. We are using here that 6 divides 12, but $\phi_0$ does not vanish at any 6-th root of unity; this is the property that we will try to generalize in the next subsection.

We now turn to the details. Let 
$$\Delta_0=\frac1{6}\zz+\Big(-\frac{\eta}{12},\frac{\eta}{12}\Big)$$
for some $\eta\in(0,1)$. Then there is a constant $c=c(\eta)>0$ such that 
$$\phi_0(\xi)\geq c\hbox{ for }\xi\in\Delta_0.$$
Crucially, since $\Delta_0$ was defined as a neighbourhood of the additive group $\frac{1}{6}\zz$, it can be expressed as a difference set. For example, we have
$$\Delta_0=\Gamma_0-\Gamma_0,\ \ 
\Gamma_0=\frac1{6}\zz+\Big(0,\frac{\eta}{12}\Big).$$

This will be our basis for the construction of $\Gamma$. 
By scaling, we have
$$\phi_A(25^j\xi)\geq c\ \hbox{ for }\ \xi\in\Delta_j:= \frac{25^{-j}}{6}\zz+\Big(-\frac{25^{-j}\eta}{12},\frac{25^{-j}\eta}{12}\Big),
$$
and similarly,
$$\phi_B(25^jt\xi)\geq c\ \hbox{ for }\ \xi\in t^{-1}\Delta_j:= \frac{25^{-j}}{6t}\zz+\Big(-\frac{25^{-j}\eta}{12t},\frac{25^{-j}\eta}{12t}\Big).
$$
Letting $\Delta=\bigcap_{j=0}^{m-1} (\Delta_j\cap t^{-1}\Delta_j)$, we get (\ref{e-add2}) with
$C_1=\frac{\log L}{4\log(1/c)}$. 

It is tempting now to choose 
$$
\Gamma=\bigcap_{j=0}^{m-1} (\Gamma_j\cap t^{-1}\Gamma_j) \cap [0,1]
$$
with $\Gamma_j:= \frac{25^{-j}}{6}\zz+\big(0,\frac{25^{-j}\eta}{12}\big)$. Then $\Gamma-\Gamma\subset\Delta$, and since each $\Gamma_j$ has density $\eta/2$ in $\rr$ (in the obvious intuitive sense), we might expect that 
\begin{equation}\label{e-gammasize}
|\Gamma|\geq \Big(\frac{\eta}{2}\Big)^{2m}
\end{equation}
which is greater than $25^{-(1-\epsilon)m}$ as long as $\epsilon<1-\frac{\log 4-2\log\eta}{\log 25}$ (note that the last fraction is less than 1 if $\eta$ is close to 1).
In reality, this turns out to be a little bit too optimistic; however, if we instead define 
$$
\Gamma=\bigcap_{j=0}^{m-1} (\Gamma_j+\tau_j)\cap (t^{-1}\Gamma_j+\tau'_j) \cap [0,1]
$$
and average over the translations $\tau_j,\tau'_j$, we find that there is a choice of $\tau_j,\tau'_j$ such that 
(\ref{e-gammasize}) holds. Of course, the new $\Gamma$ still satisfies $\Gamma-\Gamma\subset\Delta$.



\subsection{The cyclotomic divisors of $A(x)$}
\label{sec-roots}

In order to generalize the construction in Section \ref{sec-badcyclo} to a wider class of sets, we need to study vanishing sums of roots of unity. 
Let $z_1,\dots,z_k$ be $N$-th roots of unity (not necessarily primitive). When can we have
\begin{equation}\label{lamprey}
z_1+\dots+z_k=0?
\end{equation}
This is relevant to our problem for the following reason. The construction of the set $\Gamma$ depends on the divisibility of $A,B$ by cyclotomic polynomials. Since $\Phi_s$ is irreducible, it divides $A(x)$ if and only if 
$$
A(e^{2\pi i/N})=\sum_{a\in A}e^{2\pi i a/N}=0.
$$
This is a vanishing sum of roots of unity as in (\ref{lamprey}). It is therefore in our interest to obtain effective characterizations of such sums, with $N=\text{lcm}(S_A)$ and
$$
S_A=\{s:\ \Phi_s|A,\ (s,|A|)=1\}.
$$
Fortunately, the subject has been studied quite extensively in number theory, see e.g. \cite{deB}, \cite{LL}, \cite{Mann}, \cite{PR}, \cite{Re1}, \cite{Re2}, \cite{schoen}, \cite{CJ}, \cite{CS}.

Clearly, one instance of (\ref{lamprey}) is when $k$ divides $N$ and $z_1,\dots,z_k$ are $k$-th roots of unity:
$$\sum_{j=0}^{k-1}e^{2\pi i j/k}=0.$$
Geometrically, this is represented by a regular $k$-gon on the unit circle. It is easy to construct further examples by rotating such regular polygons or adding them together. For example, adding a regular $k$-gon and a rotated regular $k'$-gon
$$
\sum_{j=0}^{k-1}e^{2\pi i j/k}+e^{2\pi i/r}\sum_{j'=0}^{k'-1}e^{2\pi i j'/k'}=0
$$
produces another vanishing sum of $N$-th roots of unity, provided that $N$ is divisible by $\text{lcm}(k,k',r)$.

A fundamental theorem of R\'edei \cite{Re1}, \cite{Re2}, de Bruijn \cite{deB} and Schoenberg \cite{schoen} asserts that in fact all vanishing sums of roots of unity can be represented in this manner, provided that we are allowed to {\em subtract} polygons as well.

\begin{theorem}\label{RdBS}
Every vanishing sum of roots of unity (\ref{lamprey}) can be represented as a linear combination of regular polygons with integer (positive or negative) coefficients.
\end{theorem}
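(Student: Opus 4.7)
The plan is to recast the statement as a polynomial divisibility problem and argue by induction on the number $\omega(N)$ of distinct prime divisors of $N := \mathrm{lcm}(\mathrm{ord}(z_1), \ldots, \mathrm{ord}(z_k))$. Set $\zeta = e^{2\pi i/N}$ and write $z_i = \zeta^{a_i}$ with $0 \le a_i < N$. A vanishing sum with integer coefficients $c_i$ corresponds to the polynomial $f(x) := \sum_i c_i x^{a_i} \in \mathbb{Z}[x]$ with $f(\zeta) = 0$; since $\Phi_N$ is the monic minimal polynomial of $\zeta$ over $\mathbb{Q}$, Gauss's lemma gives $\Phi_N(x) \mid f(x)$ in $\mathbb{Z}[x]$. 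A regular $k$-gon (with $k \mid N$, $k \ge 2$) rotated by $\zeta^r$ corresponds to the polygon polynomial $\pi_{k,r}(x) := x^r(1 + x^{N/k} + \cdots + x^{(k-1)N/k})$, which vanishes at $\zeta$. Let $J_N$ denote the $\mathbb{Z}$-submodule of $\mathbb{Z}[x]/(x^N - 1)$ spanned by all $\pi_{k,r}$. The theorem is equivalent to the inclusion $\ker(\mathrm{ev}_\zeta) \subseteq J_N$, the reverse inclusion being immediate.

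For the base case $\omega(N) = 1$, i.e.\ $N = p^a$, the cyclotomic polynomial $\Phi_{p^a}(x) = 1 + x^{p^{a-1}} + \cdots + x^{(p-1)p^{a-1}}$ is itself the polygon polynomial $\pi_{p,0}(x)$. Hence any $f \in \ker(\mathrm{ev}_\zeta)$ factors as $f = \Phi_{p^a} \cdot q$ for some $q = \sum_j q_j x^j \in \mathbb{Z}[x]$, and this immediately realizes $f = \sum_j q_j\, \pi_{p,j}$ as an integer combination of rotated prime polygons, as desired.

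For the inductive step I assume $\omega(N) \ge 2$ and fix a prime $p$ dividing $N$. The residues $\{0, 1, \ldots, N-1\}$ partition into $N/p$ cosets of size $p$ modulo $N/p$, each being precisely the support of one rotated polygon $\pi_{p,r}$. By subtracting appropriate integer multiples of the $\pi_{p,r}$ from $f$ one can normalize so that in each coset at least one of the $p$ coefficients of the resulting $\tilde f$ is zero. The goal is then to show, using $\mathrm{ev}_\zeta(\tilde f) = 0$ together with the CRT decomposition $\mathbb{Z}[\zeta_N] \cong \mathbb{Z}[\zeta_p] \otimes_{\mathbb{Z}} \mathbb{Z}[\zeta_{N/p}]$ (available when $p^2 \nmid N$), that $\tilde f$ descends to an element of $\ker(\mathrm{ev}_{\zeta_{N/p}})$, to which the inductive hypothesis, with $\omega(N/p) < \omega(N)$, applies. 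The principal obstacle is the case $p^2 \mid N$: then $\omega(N/p) = \omega(N)$ and the naive induction fails. The classical remedy (R\'edei--de Bruijn for squarefree $N$, with Schoenberg's refinement handling prime powers) is to interlace the induction on $\omega(N)$ with a separate argument using the identity $\Phi_{p^a}(x) = \Phi_p(x^{p^{a-1}})$, which shows that polygons at prime-power scales are obtained from polygons at the base prime by substituting $x \mapsto x^{p^{a-1}}$, allowing one to first reduce to the squarefree situation before running the $\omega$-induction above.
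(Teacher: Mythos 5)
The paper does not prove Theorem \ref{RdBS}; it cites it as classical (R\'edei \cite{Re1,Re2}, de Bruijn \cite{deB}, Schoenberg \cite{schoen}), so there is no internal proof to compare against. Your proposal must therefore stand on its own, and I do not think it does in its present form. The translation to divisibility by $\Phi_N$ in $\Z[x]$, and the base case $\omega(N)=1$ (where $\Phi_{p^a}(x)=1+x^{p^{a-1}}+\cdots+x^{(p-1)p^{a-1}}$ is literally the $p$-gon $\pi_{p,0}$ and the quotient has degree $<p^a-\phi(p^a)=p^{a-1}$, so $f=\sum_j q_j\pi_{p,j}$ with all $\pi_{p,j}$ genuine polygons), are fine. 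The squarefree inductive step is on the right track but is stated too loosely: you should normalize so that a \emph{fixed} layer of coefficients vanishes (say the one corresponding to $\zeta_p^{p-1}$ in the CRT splitting $\zeta_N^j=\zeta_p^{a(j)}\zeta_{N/p}^{b(j)}$), not merely ``at least one of the $p$ coefficients in each coset''; only then does $\tilde f(\zeta_N)=0$, together with the freeness of $\Z[\zeta_N]$ over $\Z[\zeta_{N/p}]$ with basis $1,\zeta_p,\ldots,\zeta_p^{p-2}$, force \emph{each} of the $p$ layers $G_a$ to vanish at $\zeta_{N/p}$, so that the inductive hypothesis applies layer by layer; one must then also check that lifting a $q$-gon for $N/p$-th roots within a fixed layer $a$ yields a genuine $q$-gon for $N$-th roots (it does, because $\mathrm{lcm}(p,(N/p)/q)=N/q$).

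The genuine gap is the case $p^2\mid N$, which you acknowledge but do not handle. Your remedy --- ``interlace the induction on $\omega(N)$ with a separate argument using $\Phi_{p^a}(x)=\Phi_p(x^{p^{a-1}})$, allowing one to first reduce to the squarefree situation'' --- is not a proof and, as far as I can tell, does not describe a valid reduction. The tensor decomposition $\Z[\zeta_N]\cong\Z[\zeta_p]\otimes\Z[\zeta_{N/p}]$ that powers the squarefree step fails when $p^2\mid N$ (the relevant extension $\Z[\zeta_{N/p}]\subset\Z[\zeta_N]$ then has degree $p$, not $p-1$, with $\zeta_N$ satisfying $x^p-\zeta_{N/p}$), and simply substituting $x\mapsto x^{p^{a-1}}$ maps $\Phi_{pM}$ to $\Phi_{p^aM}$ --- it does not descend a vanishing sum of $p^aM$-th roots to one of $pM$-th roots, nor does it map the polygon module for $N$ to that for $\mathrm{rad}(N)$. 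Handling this case is precisely Schoenberg's contribution, and it requires an actual argument (e.g.\ showing that the ideal in $\Z[x]/(x^N-1)$ generated by the prime-polygon polynomials $\pi_{p,0}(x)=(x^N-1)/(x^{N/p}-1)$ coincides with the ideal $(\Phi_N)$, using a M\"obius-type identity). As written, this step is asserted rather than proved, and the proposal cannot be accepted without it.
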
 

It is important to note that linear combinations of polygons with positive coefficients are not sufficient. For example, let
\begin{align*}
&e^{2\pi i/5}+e^{4\pi i/5}+e^{6\pi i/5}+e^{8\pi i/5}+e^{5\pi i/3}+e^{\pi i/3}\\
&=\sum_{j=0}^{4}e^{2\pi i j/5}-\sum_{j'=0}^{2}e^{2\pi i j'/3}
+(e^{2\pi i/3}+e^{5\pi i/3})+(e^{\pi i/3}+e^{4\pi i/3})
\end{align*}
This is a vanishing sum of roots of unity, represented here as (pentagon) $-$ (triangle) $+$ (2 line segments). It cannot, however, be written as a linear combination of regular polygons with positive coefficients.

What we need is an effective version of Theorem \ref{RdBS}. Essentially, we want to be able to control the number and size of the polygons used in the decomposition relative to the size of $A$. Theorem \ref{RdBS}, as it stands, does not preclude the possibility that a vanishing sum with few non-zero terms can only be represented by combining many positive and negative large polygons with massive cancellations between them. This is what we wish to avoid. 

Some quantitative results of this type are already available in the literature:

\begin{itemize}

\item (de Bruijn \cite{deB}) If $N=p^\alpha q^\beta$, with $p,q$ prime, then any vanishing sum of $N$-th roots of unity is a linear combination of $p$-gons and $q$-gons with nonnegative integer coefficients. In particular, $|A|\geq\min (p,q)$.

\item (Lam-Leung \cite{LL}) If $N=\prod p_j^{\alpha_j}$, then $|A|=\sum a_jp_j$, where $a_j$ are nonnegative integers. In particular, $|A|\geq \min \{p_j\}$.
\end{itemize}

For comparison, here is a condition from \cite{BLV} under which the construction of $\Gamma$ from Section \ref{sec-badcyclo} generalizes in a very direct manner.

\begin{proposition}\label{compatible}
Suppose that we can write $N=PQ$ with $P,Q>1$ so that: 
\begin{itemize}
\item $s$ does not divide $Q$ for any $s\in S_A$,
\item  $|A|>P$.
\end{itemize}
Suppose also that a similar statement holds for $B$. 
Then $\phi_t$ is SLV-structured for every $t\in[0,1]$.
\end{proposition}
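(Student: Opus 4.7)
The plan is to carry out the construction of Section \ref{sec-badcyclo} in the present generality. Apply the hypothesis separately to $A$ and $B$, obtaining factorizations $\mathrm{lcm}(S_A) = PQ$ and $\mathrm{lcm}(S_B) = P_B Q_B$; after replacing $Q$ (resp.~$Q_B$) by its largest divisor coprime to $P$ (resp.~$P_B$), which only shrinks $Q$ and preserves the hypothesis $s \nmid Q$ for $s \in S_A$, we may assume $\gcd(P,Q) = \gcd(P_B,Q_B) = 1$. The one-scale good set is the neighborhood $\Delta_0^A := \frac{1}{Q}\zz + (-\eta/(2Q),\eta/(2Q))$ of $\frac{1}{Q}\zz$, with $\eta < 2/P$ chosen depending only on $A$. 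The number-theoretic core of the argument is that $s\nmid Q$ together with $\gcd(P,Q) = 1$ forces the distance from any root $k/s$ of $\Phi_s$ (with $s \in S_A$, $(k,s) = 1$) to $\frac{1}{Q}\zz$ to be $1/\mathrm{lcm}(s,Q) \geq 1/(PQ)$, since $s/\gcd(s,Q) \mid P$. Hence $\Delta_0^A$ avoids all zeros of $\phi_A$ on the unit circle, so $|\phi_A| \geq c_A$ on $\Delta_0^A$ for some $c_A = c_A(A) > 0$ by compactness, and $\Delta_0^A = \Gamma_0^A - \Gamma_0^A$ where $\Gamma_0^A := \frac{1}{Q}\zz + (0, \eta/(2Q))$ has density $\eta/2$. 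The analogous construction for $B$ produces $\Gamma_0^B, \Delta_0^B, c_B, \eta_B$.

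Rescale to all frequencies: put $\Gamma_j^A := L^{-j} \Gamma_0^A$, $\Delta_j^A := L^{-j} \Delta_0^A = \Gamma_j^A - \Gamma_j^A$, and analogously for $B$, so that $|\phi_A(L^j \xi)| \geq c_A$ on $\Delta_j^A$ and $|\phi_B(L^j t \xi)| \geq c_B$ on $t^{-1} \Delta_j^B$. Define
$$\Delta := \bigcap_{j=1}^m \bigl(\Delta_j^A \cap t^{-1}\Delta_j^B\bigr), \qquad \Gamma := \bigcap_{j=1}^m \bigl((\Gamma_j^A + \tau_j) \cap (t^{-1}\Gamma_j^B + \tau_j')\bigr) \cap [0,1]$$
for translations $\tau_j, \tau_j' \in [0,1]$ to be chosen. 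Translation leaves the difference-set operation invariant, so $\Gamma - \Gamma \subset \Delta$ regardless of the $\tau_j, \tau_j'$, and on $\Delta$ one has $\prod_{j=1}^m |\phi_t(L^j \xi)| \geq (c_A c_B)^m =: L^{-C_1 m}$, which verifies \eqref{e-add1a}.

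For the size estimate \eqref{e-add2a}, randomize: take the $\tau_j, \tau_j'$ independent and uniform on $[0,1]$. Each $\Gamma_j^A + \tau_j$ is periodic at scale $L^{-j}/Q \ll 1$ with density $\eta/2$, and similarly for the $B$-factors, so Fubini gives $\ee|\Gamma| = (\eta \eta_B / 4)^m (1 + o(1))$ and some deterministic choice achieves $|\Gamma| \gtrsim (\eta \eta_B / 4)^m$. The requirement $(L \eta \eta_B / 4)^m \geq C_2 K$ then reduces, upon taking $\eta$ arbitrarily close to $2/P$ and $\eta_B$ close to $2/P_B$, to $((|A|/P)(|B|/P_B))^m \geq C_2 K$ modulo a $(1+o(1))$ factor. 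The strict integer inequalities $|A| > P$ and $|B| > P_B$ yield $(|A|/P)(|B|/P_B) \geq 1 + \kappa$ for some $\kappa = \kappa(A,B) > 0$, and combining with $m \geq c_0 \log N$ and $K = N^{\epsilon_0}$ gives the required bound whenever $\epsilon_0 < c_0 \log(1 + \kappa/2)$.

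The principal difficulty is the size estimate for $\Gamma$: intersecting two incommensurate families $\{\Gamma_j^A\}$ and $\{t^{-1} \Gamma_j^B\}$ across $m$ scales with a $t$-twist, uniformly in $t$, without losing more than the expected density $(\eta \eta_B / 4)^m$. The randomized-translation trick resolves this cleanly and is the reason the quantitative hypothesis $|A| > P$, $|B| > P_B$ (rather than mere nondivisibility) is essential: it is precisely that strict integer gap which supplies the multiplicative slack $L/(PP_B) > 1$ needed to absorb the factor $K = N^{\epsilon_0}$ in \eqref{e-add2a}.
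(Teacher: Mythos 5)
Your proof is the intended direct generalization of the construction in Section \ref{sec-badcyclo}, and its overall structure --- the periodic SLV set $\Gamma_0^A = \frac1{Q}\zz + (0,\eta/(2Q))$ with $\eta<2/P$, the rescaling to all scales, the randomized translations $\tau_j,\tau_j'$ and Fubini giving $\ee|\Gamma|\approx(\eta\eta_B/4)^m$, and the conversion of the integer gaps $|A|>P$, $|B|>P_B$ into the multiplicative slack $L/(PP_B)>1$ needed to beat $K=N^{\epsilon_0}$ --- is exactly the route the paper indicates. There is, however, a concrete error in your opening reduction.

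You replace $Q$ by its largest divisor coprime to $P$, keep $P$ fixed, and then justify the key inclusion $s/\gcd(s,Q)\mid P$ by invoking $\gcd(P,Q)=1$. But once $Q$ is shrunk you no longer have $PQ=N$, only $PQ\mid N$, and $s\mid N$ together with $\gcd(P,Q)=1$ does \emph{not} force $s/\gcd(s,Q)\mid P$. For instance take $N=12$, $P=2$, $Q=6$, $S_A=\{12\}$; the hypothesis of the proposition holds since $12\nmid 6$. Your reduction replaces $Q$ by $3$, and then $s/\gcd(s,Q)=12/3=4$, which does not divide $P=2$, so the claimed bound $1/\mathrm{lcm}(s,Q)\geq 1/(PQ)$ on the distance from the roots of $\Phi_s$ to $\frac1{Q}\zz$ fails.

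The reduction is in fact unnecessary, and the fix is to simply delete it. Keep the original $Q$ with $PQ=N$, and observe: if $d=\gcd(s,Q)$, then $s/d \mid P\cdot(Q/d)$ and $\gcd(s/d,\,Q/d)=1$, hence $s/d\mid P$ by comparing prime valuations --- no coprimality of $P$ and $Q$ is required. (In the example above, $\gcd(12,6)=6$ and $12/6=2\mid P$, as wanted.) This gives $\mathrm{lcm}(s,Q)\leq PQ$, hence distance at least $1/(PQ)>\eta/(2Q)$ from every root of every $\Phi_s$, $s\in S_A$, to $\frac1{Q}\zz$, and the rest of your argument --- the compactness lower bound $c_A$, the difference-set inclusion $\Gamma-\Gamma\subset\Delta$, the averaged size estimate, and the final exponent count $\epsilon_0<c_0\log(1+\kappa/2)$ --- goes through as written.
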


The above suffices to prove the results in \cite{BLV} for $|A|,|B|\leq 6$.
If $|A|=2,3,4$, or $6$, we can use structural results such as Theorem \ref{RdBS} to prove that $A(x)$ cannot in fact be divisible by $\Phi_s$ with $s$ relatively prime to $|A|$. 
If on the other hand $|A|=5$, such divisors are indeed possible, as the example in Section \ref{sec-badcyclo} shows. Moreover, there may be many such divisors. We are, however, able to prove that all values of $s$ such that $\Phi_s|A$ and $(s,|A|)=1$ have the form $s=2^\alpha 3^\beta m(s)$, where $\alpha,\beta$ are the same for all $s$ and $m(s)$ is relatively prime to 6. Thus, Proposition \ref{compatible} applies with $P=2$ or 3.

Clearly, the size restrictions on $A$ and $B$ are not needed if $A$ and $B$ are given explicitly and if the assumptions of Proposition \ref{compatible} can be verified directly. 
There are, however, examples of sets for which these assumptions are not satisfied. 
To extend Theorem \ref{thm-upper} (ii) to all rational product Cantor sets (with no size restrictions on $A,B$), we would need a more general result.

In follow-up work to \cite{BLV}, Matthew Bond and I formulated the following conjecture and verified it in a number of special cases, including some where the assumptions of Proposition \ref{compatible} fail to hold. We do not know of any counterexamples.

\begin{conjecture} \label{moneygrowsontrees}
For any $A(x)$ as above, there is a $Q|N$ such that
\begin{itemize}
\item  $s$ does not divide $Q$ for any $s\in S_A$,
\item  $|A|>T$, where
$$
T=\max\{\frac{s}{(s,Q)}:\ s\in S_A\}
$$
\end{itemize}
\end{conjecture}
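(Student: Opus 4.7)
\medskip

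\noindent\textbf{Proof strategy.}

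The plan is to recast the existence of $Q$ as a combinatorial optimization over the $p$-adic data of $S_A$, to extract the structural constraint that no element of $S_A$ is a prime power, and then to combine this with the Lam-Leung and R\'edei-de Bruijn-Schoenberg theorems to push the lower bound on $|A|$ past the size of $T$.

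First, re-encode the problem $p$-adically: writing $Q=\prod_{p\mid N} p^{\gamma_p}$ with $0\le\gamma_p\le\alpha_p:=v_p(N)$, the condition $s\nmid Q$ becomes that there is some prime $p=p(s)$ with $v_p(s)>\gamma_p$, and
$$\frac{s}{(s,Q)}=\prod_{p\,:\,v_p(s)>\gamma_p} p^{\,v_p(s)-\gamma_p}.$$
So the conjecture is equivalent to choosing a tuple $(\gamma_p)$ for which every $s\in S_A$ has a ``witness'' and $\max_{s\in S_A} s/(s,Q)<|A|$. The key structural input is that every $s\in S_A$ has at least two distinct prime factors: otherwise $s=p^\alpha$ would give $\Phi_{p^\alpha}(1)=p$ dividing $A(1)=|A|$, contradicting $(s,|A|)=1$. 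This guarantees that each $s$ admits several candidate witnessing primes, so the covering part of the problem is highly flexible.

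I would then attempt a greedy construction: for each $s\in S_A$ pick a witness $p(s)\mid s$ minimizing $p(s)^{v_{p(s)}(s)}$, and try to realize a single $\gamma$ making each $p(s)$ an actual witness while forcing all other primes to be ``neutral'' (i.e.\ $\gamma_p=\alpha_p$, so they contribute a factor of $1$ to $s/(s,Q)$). When the assignments $s\mapsto p(s)$ are consistent across $S_A$, this yields $T\le\max_{s\in S_A} p(s)^{v_{p(s)}(s)}$, and one would then use Lam-Leung to decompose $|A|=\sum_{p\mid N} a_p\,p$ under the very strong constraint $(|A|,s)=1$ for all $s\in S_A$, and argue that $|A|$ exceeds this $T$. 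When the greedy witness assignment is inconsistent, some additional $\gamma_p$'s must be lowered; each lowering can be paid for by invoking an additional structural restriction on $A$ via Theorem \ref{RdBS} (decomposing $A$ into signed regular polygons). An induction on the number of distinct prime factors of $N$, or alternatively on $|S_A|$, seems the natural organizing device.

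The main obstacle is the quantitative matching between the lower bound on $|A|$ and the upper bound on $T$. Lam-Leung only tells us that $|A|$ lies in the additive monoid generated by the primes dividing $N$, whereas the conjecture demands a strict comparison with specific prime-power products read off from $S_A$. What one really needs is a sharpened structural lower bound of the form: \emph{if $\Phi_s\mid A$ with $s=\prod_i p_i^{a_i}$ having at least two prime factors and $(s,|A|)=1$, then $|A|>\min_i p_i^{a_i}$}, proved through a careful analysis of the polygon decomposition furnished by Theorem \ref{RdBS}. The special cases already verified by Bond and the author should indicate the correct induction template and the right witness assignment in full generality.
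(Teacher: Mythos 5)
This statement is labelled as a \emph{Conjecture} in the paper, and the surrounding text is explicit that it remains open: the author writes that she and Bond ``do not know how to do this at this time.'' So there is no proof of the paper's to compare your attempt against, and the correct ground truth is that the statement is unproved. What you have written is a strategy sketch rather than a proof, and it contains a genuine error at the step you yourself identify as the crux.

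The observation that no $s\in S_A$ can be a prime power is correct and useful: $\Phi_{p^\alpha}(1)=p$, so $\Phi_{p^\alpha}\mid A(x)$ forces $p\mid A(1)=|A|$, incompatible with $(s,|A|)=1$. The $p$-adic reformulation of the choice of $Q$ is also fine. However, the ``sharpened structural lower bound'' you say one really needs --- that $\Phi_s\mid A$ with $s=\prod_i p_i^{a_i}$ (at least two primes) and $(s,|A|)=1$ forces $|A|>\min_i p_i^{a_i}$ --- is \emph{false}. Take $s=225=3^2\cdot 5^2$ and let $A$ (reduced mod $225$) be the disjoint union of a $3$-gon and a rotated $5$-gon, e.g.\ $\{0,75,150\}\cup\{1,46,91,136,181\}$. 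Then $|A|=8$, $A(e^{2\pi i/225})=0$ so $\Phi_{225}\mid A(x)$, and $(225,8)=1$, yet $\min(3^2,5^2)=9>|A|$. What Lam--Leung together with the coprimality constraint actually yields in the two-prime case is only $|A|\ge p+q$ (each coefficient in $|A|=\alpha p+\beta q$ must be nonzero, else $p$ or $q$ divides $|A|$), which gives $|A|>\min(p,q)$ but not $|A|>\min(p^a,q^b)$; with three or more primes even that argument degrades, since a single vanishing coefficient no longer contradicts coprimality.

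The remaining content --- pick witness primes greedily, handle inconsistent assignments by ``lowering additional $\gamma_p$'s'' and ``invoking additional structural restrictions via Theorem~\ref{RdBS}'', then induct --- is not carried out, and the consistency of witness assignments across the various $s\in S_A$ is exactly where the known special cases stop working and the conjecture becomes hard. So the proposal does not constitute a proof, and its central lemma needs to be replaced before the outline could even in principle succeed.
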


The conditions in Conjecture \ref{moneygrowsontrees} are sufficient to allow a construction of $\Gamma$ along lines similar to those in Section \ref{sec-badcyclo} and Proposition \ref{compatible}. Therefore, proving the conjecture would also remove the size restriction from Theorem \ref{thm-upper} (ii). Unfortunately, we do not know how to do this at this time.


\section{A Favard length estimate for random sets}
\label{PS-rewrite}

Following \cite{PS1}, we construct a ``random 4-corner set" $G_\infty=\bigcup_{n=1}^\infty$ via a randomized Cantor iteration process. 
Partition the unit square into 4 congruent squares of sidelength 1/2.  In each of these squares, choose one of the 4 dyadic subsquares of length 1/4, independently and uniformly at random. This produces the set $G_1$ consisting of 4 squares of sidelength 1/4, one in each of the 4 squares of length 1/2. We continue by induction: assume that we have already constructed the set $G_n$ consisting of $4^n$ dyadic squares of sidelength $4^{-n}$. Subdivide each of the squares of $G_n$ into 4 squares of length $4^{-n}/2$, and in each of those, choose a dyadic square of sidelength $4^{-n-1}$, uniformly and at random. All the random choices are made independently between different parent squares, and also independently of all the previous steps of the construction.

\begin{figure}[htbp]\centering\includegraphics[width=4.in]{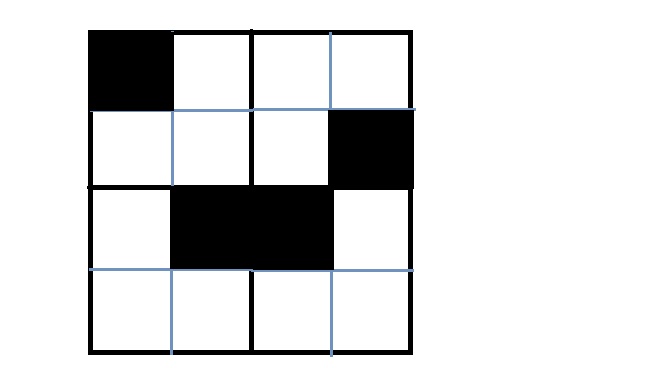}
\caption{The first iteration of a random 4-corner set. The 4 selected live squares have addresses 02, 13, 21, 33.}
\label{fig-random}
\end{figure}

\begin{theorem}\label{PS-random} \cite{PS1}
We have
\begin{equation}\label{e-ps}
\mathbb{E} ( {\rm Fav}(G_n) )\leq C n^{-1} .
\end{equation}

\end{theorem}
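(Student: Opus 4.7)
I would proceed via the Cauchy--Crofton formulation of the Favard length,
$$\mathrm{Fav}(G_n)=\frac{1}{\pi}\!\int \mathbf{1}_{\{L\cap G_n\neq\emptyset\}}\,d\lambda(L),$$
where $d\lambda$ is the kinematic measure on lines. Taking expectation and interchanging with the integral, it is enough to show
$$\int \mathbb{P}(L\cap G_n\neq\emptyset)\,d\lambda(L)\;\lesssim\;\frac{1}{n}.$$

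For each fixed line $L$ meeting the unit square, the counts $N_k(L)=\#\{Q\in G_k:L\cap Q\neq\emptyset\}$ form a multi-type branching process: conditional on $G_{k-1}$, each live parent $Q\in G_{k-1}$ produces, independently of the others, as offspring those of its four randomly chosen sub-squares that meet $L$, and the offspring distribution depends only on the scale-invariant ``type'' of the pair $(L,Q)$---of which there are finitely many. A direct calculation gives that the mean offspring for a type-$\tau$ parent equals $h_\tau/4$, where $h_\tau$ is the number of the sixteen candidate sub-sub-squares of a type-$\tau$ parent that are met by $L$. Because $G_\infty$ has Hausdorff dimension one, $\mathbb{E}[N_n(L)]$ stays bounded in $n$ for almost every line $L$, which forces the Perron eigenvalue of the mean-offspring matrix to equal $1$: the process is \emph{critical} for a.e.\ $L$.

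For a critical multi-type branching process with bounded second moments, the classical Kolmogorov survival estimate, extended to the multi-type setting by Athreya and Ney, gives
$$\mathbb{P}(N_n(L)\geq 1)\;\leq\;\frac{C(L)}{n},$$
with $C(L)$ bounded uniformly over the type space. Integrating against the finite measure $d\lambda$ then yields $\mathbb{E}[\mathrm{Fav}(G_n)]\lesssim 1/n$, as required. The main obstacle is producing this critical-survival estimate \emph{uniformly} in $L$: one must identify the types, verify Perron eigenvalue $=1$ by a short combinatorial identity (this is where dimension one of $G_\infty$ really enters), and carry out a careful second-moment or generating-function argument that handles the ``degenerate'' line types (lines through corners or tangent to the dyadic grid) with enough uniformity that the constant $C(L)$ remains bounded. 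This uniform critical-survival bound---rather than any Fourier-analytic input---is the crux of the argument.
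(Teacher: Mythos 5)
Your branching-process heuristic identifies the right phenomenon---criticality is indeed why $1/n$ is the correct rate---but the plan rests on a claim that is false and that cannot be patched as a ``detail at the end.'' You assert that the process is a multi-type Galton--Watson process with finitely many types. For a fixed slope $t$, the scale-invariant datum governing a pair $(L,Q)$ is the position of $L$ within $Q$ (say, the intercept with the left edge of $Q$, in units of $\operatorname{side}(Q)$), which ranges over a continuum and, for irrational $t$, never recurs. One can discretize to the finite ``hit pattern'' among the sixteen grandchildren, but that finite datum is not Markovian: the child's hit pattern depends on the exact real position of $L$ inside the parent, not merely on the parent's hit pattern. What you actually have is a branching process over a continuum state space, and the Kolmogorov / Athreya--Ney critical survival asymptotics that you cite do not apply off the shelf. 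Proving $\mathbb{P}(N_n(L)\ge 1)\le C(L)/n$ with $C(L)$ integrable in the kinematic measure---in particular, surviving near lines where the offspring distribution degenerates and the variance tends to zero---\emph{is} the content of the theorem, not a loose end to be handled later. As written, the proposal has a genuine hole at the central step.

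The paper's proof avoids branching-process machinery and gets the uniformity over directions for free. Call a level-$n$ square \emph{essential} if every digit $0,1,2,3$ appears at least $\delta n$ times among the deterministic digits $g_1,\dots,g_n$ of its address; a Stirling/Chernoff estimate shows the non-essential squares number at most $4^{(1-\epsilon)n}$, so their projection is $O(4^{-\epsilon n})$, negligible. For an essential square $Q_n$ and a line $\ell$ through it with slope in $(0,1)$, at each of the $\ge \delta n$ scales $j$ with $g_{j+1}=0$, the line crosses $\sim 4^{n-j}$ level-$n$ dyadic columns inside $Q_j\setminus Q_{j+1}$; each such column contains a candidate square that is live with probability $4^{-(n-j)}$, and the randomness in distinct annuli $Q_j\setminus Q_{j+1}$ is independent. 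So the expected number of level-$n$ squares hit by $\ell$ is $\gtrsim \delta n$, and since the total projected mass of $G_n$ with multiplicity is $O(1)$, the essential squares project to a set of expected measure $O(1/n)$. This is the same ``critical'' intuition you had, made rigorous by counting hits across $\gtrsim n$ independent scales rather than by a spectral analysis of the offspring operator. If you want to salvage your route, you would need to replace the Athreya--Ney citation with a hands-on second-moment computation for each line, with explicit, integrable control of the dependence on $L$.
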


\begin{proof}
This is a simplified and streamlined version of the argument in \cite{PS1}.

We will use the convention that whenever a dyadic square in the plane is partitioned into 4 congruent dyadic squares, these subsquares will be labelled 0, 1, 2, 3 counterclockwise starting with the lower left corner. Thus each square $Q$ in $G_n$  has an address ${\bf g}= g_1x_1\dots g_n x_{n}$, 
where $g_i,x_i\in \{0,1,2,3\}$, defined in the obvious way. Here $g_i$ is the label of the deterministic square of sidelength $2\cdot 4^i$, and $x_i$ is the label of the random square of sidelength $4^{-i}$.

We will say that a square $Q\in G_n$ with address $\bf{g}$ is {\it essential} if each digit $g\in \{0,1,2,3\}$ appears at least $\delta n$ times in the sequence $g_1\dots g_n$, where $\delta>0$ is sufficiently small (to be fixed below) but independent of $n$. Otherwise, $Q$ 
is non-essential. 

\begin{lemma}\label{lemma-a1}
The number of non-essential squares in $G_n$ is bounded by $C4^{(1-\epsilon)n} $ for some $\epsilon=\epsilon(\delta)>0$.
\end{lemma}

\begin{proof}
It suffices to estimate the number of squares such that a fixed $g\in \{0,1,2,3\}$ appears at most $\delta n$ times in the sequence $g_i$. We bound this number by
$$
\sum_{j\leq \delta n} {n\choose j} 4^j \, 3^{n-j}
=\sum_{j\leq \delta n} {n\choose j} 4^{n}\,\left(\frac{3}{4}\right)^{n-j}
$$
Assuming that $n$ is large and $\delta < 0.1$, we can bound this by
\begin{equation}\label{a-e1}
\delta n {n\choose \lfloor \delta n \rfloor} 4^{n}\,   \left(\frac{3}{4}\right)^{n-\delta n}.
\end{equation}
By Stirling's formula,
\begin{align*}
\ln {n\choose \lfloor \delta n \rfloor }&=\ln(n!)-\ln((n-\delta n)!)-\ln((\delta n)!)
\\
&=n \ln n -(n-\delta n)\ln (n-\delta n)- \delta n \ln(\delta n)+ o(1) 
\\
&=c(\delta)n + o(1),
\end{align*}
where 
$$
c(\delta)= (1-\delta) \ln\frac{1}{1-\delta} -\delta\ln\frac{1}{\delta} \to 0
\hbox{ as } \delta\to 0.
$$
Hence for sufficiently large $n$ and small $\delta>0$,
$$
(\ref{a-e1})\leq \delta n e^{c(\delta)n+o(1)} 4^{n}\,   \left(\frac{3}{4}\right)^{n-\delta n}
\leq 4^{n}\,   \left(\frac{3}{4}\right)^{n-\delta n/2}
\leq 4^{n(1-\epsilon)}.
$$

\end{proof}

\begin{lemma}\label{lemma-a2}
Let $G_n$ be as above. 
Then for almost every slope $t$ (except
for the zero-measure set of directions given by lines that hit more than one vertex), 
whenever a line $\ell$ with slope $t$ intersects an essential square
$Q_n$, the expected number of squares of $G_n$ hit by $\ell$ is at least $\delta n/2$.
\end{lemma}

\begin{proof}
For $g\in \{0,1,2,3\}$, we will say that a square $Q_n$ of $G_n$ has type $g$ 
if $g$ appears in its address at least $\delta n$ times. It suffices to prove that
for every $t$ as in the lemma, there is a $g$ such that the conclusion of the lemma follows 
whenever $Q_n$ has type $g$. This implies the lemma, since each essential square has type $g$
for all $g\in \{0,1,2,3\}$.

Let $0<t<1$; other cases follow by symmetry. If $Q_n$ is a square of $G_n$, we will use $Q_j$ to denote the ancestor of $Q_n$ in $G_j$.

Suppose that $\ell$ intersects a type 0 square $Q_n$ with address ${\bf g}$. Then there are at least $\delta n$ scales $j$ such that $g_{j+1}=0$. For each such scale, $Q_{j}$ is a live square of $G_{j}$, and $Q_{j+1}$ lies within the lower left dyadic subsquare of $Q_{j}$ of sidelength $4^{-j}/2$.
By basic geometry, $\ell$ has to intersect at least $4^{n-j}/2$ dyadic squares of sidelength $4^{-n}$ in $Q_j\setminus Q_{j+1}$, and each of these squares has probability $4^{-(n-j)}$ of being a live square of $G_n$. Thus with probability at least 1/2, the line $\ell$ will intersect at least one live $n$-th iteration square contained
in $Q_j\setminus Q_{j+1}$. Since the sets $Q_j\setminus Q_{j+1}$ are mutually disjoint, the lemma follows.

\end{proof}

This implies (\ref{e-ps}) as follows. By Lemma \ref{lemma-a1}, the non-essential squares contribute at most
$O(4^{-\epsilon n})$ to (\ref{e-ps}). On the other hand, whenever a line in a non-exceptional direction hits an essential square, the expected number of squares it hits is at least $\delta n/2$ by Lemma \ref{lemma-a2}, so the projection of the set of essential squares in almost every direction has expected length bounded by $C/\delta n$.

\end{proof}


\bigskip
\noindent\textbf{Acknowledgement.} The author is supported in part by NSERC Discovery Grant 22R80520.


}

\bibliographystyle{amsplain}

\noindent{\sc Department of Mathematics, University of British Columbia, Vancouver,
B.C. V6T 1Z2, Canada}

\smallskip
                                                                                     
\noindent{\it  ilaba@math.ubc.ca}

\end{document}